\documentclass[reqno]{amsart}

\usepackage[utf8]{inputenc}
\usepackage[english]{babel}

\usepackage{soul}
\usepackage{braket}
\usepackage{amsmath}
\usepackage{amstext}
\usepackage{amssymb}
\usepackage{amsthm}
\usepackage{mathtools}
\usepackage{stmaryrd}
\usepackage{mathrsfs}
\usepackage{extarrows}
\usepackage{afterpage}
\usepackage{array}
\usepackage{microtype}
\usepackage{graphicx}
\usepackage{amsfonts}
\usepackage{units}
\usepackage[shortlabels]{enumitem}
\usepackage{url}
\usepackage{bm}
\usepackage{tikz}
\usetikzlibrary{matrix,arrows,calc,fit,cd,positioning,intersections,arrows.meta,decorations.markings,decorations.pathmorphing,shapes,calligraphy}
\usepackage{tikz-3dplot}
\usepackage{subcaption}

\usepackage{hyperref}

\usepackage{xcolor}
\hypersetup{
    pdftitle={The K(pi, 1) conjecture for Artin groups of spherical type},
    pdfauthor={Giovanni Paolini},
    colorlinks,
    linkcolor={red!50!black},
    citecolor={blue!50!black},
    urlcolor={blue!80!black}
}

\usepackage{cleveref}
\usepackage[margin=1cm, font=small]{caption}
\usepackage{colortbl}
\usepackage{framed}

\usepackage{cleveref}

\allowdisplaybreaks[4]

\numberwithin{equation}{section}
\numberwithin{figure}{section}

\theoremstyle{definition}
\newtheorem{theorem}{Theorem}[section]

\newtheorem{lemma}[theorem]{Lemma}

\newtheorem{corollary}[theorem]{Corollary}
\newtheorem{conjecture}[theorem]{Conjecture}

\newtheorem*{mainthm*}{Main Theorem}

\newcommand{\Z}{\mathbb{Z}}
\newcommand{\R}{\mathbb{R}}
\newcommand{\C}{\mathbb{C}}

\newcommand{\M}{\mathcal{M}}
\renewcommand{\S}{\mathfrak{S}}

\newcommand{\F}{\mathcal{F}}
\newcommand{\A}{\mathcal{A}}
\newcommand{\E}{\mathcal{E}}

\newcommand{\G}{\mathcal{G}}

\DeclareMathOperator{\GL}{GL}
\DeclareMathOperator{\Br}{Br}

\newcommand{\<}{\langle}
\renewcommand{\>}{\rangle}

\newcommand{\lleq}{\leq_\text{L}}
\newcommand{\rleq}{\leq_\text{R}}

\title[The $K(\pi, 1)$ conjecture for Artin groups of spherical type]{The $K(\pi, 1)$ conjecture for Artin groups \\of spherical type}

\author{Giovanni Paolini}

\begin{document}

\begin{abstract}
    In these notes, we introduce the 50-year-old $K(\pi, 1)$ conjecture alongside Coxeter and Artin groups.
    Roughly speaking, the conjecture states that the complement in $\C^n$ of a ``symmetric'' configuration of hyperplanes is a $K(\pi, 1)$ space.
    Our end goal is to present a proof of the conjecture in the so-called spherical case, where only a finite number of hyperplanes are removed, through methods from combinatorial topology.
    This proof draws inspiration from the original proof of the spherical case, which is a special case of a celebrated 1972 theorem by Pierre Deligne.
\end{abstract}

\maketitle


\section{Introduction}
\label{sec:introduction}

The $K(\pi, 1)$ conjecture, dating back to the 1960s and 1970s and usually attributed to Brieskorn, Arnol'd, Pham, and Thom, states that the orbit configuration space of any Artin group is an Eilenberg-Maclane space (or classifying space, or $K(\pi, 1)$).
The most famous special case is the space of configurations of $n$ distinct points in $\C$, which was proved to be a classifying space for the braid group $\Br_n$ by Fox and Neuwirth \cite{fox1962braid}; see \Cref{fig:configuration-space}.

\begin{figure}[h]
    \newcommand{\scale}{2.6}
    \begin{center}
        \begin{subfigure}[t]{.3\linewidth}
            \begin{tikzpicture}[scale=1.7]
                \draw[fill=black!10] (-1, -1) rectangle (1, 1);
                \node[draw, circle, inner sep=1pt, fill=black] at (0.4, 0.15) {};
                \node[draw, circle, inner sep=1pt, fill=black] at (-0.4, 0.3) {};
                \node[draw, circle, inner sep=1pt, fill=black] at (0.1, -0.25) {};
                \node at (0.8, 0.8) {$\C$};
            \end{tikzpicture}
        \end{subfigure}\qquad
        \begin{subfigure}[t]{.3\linewidth}
            \centering
            \definecolor{darkgreen}{RGB}{0,100,0}
        	\begin{tikzpicture}[scale=1.13]
        	\draw[fill=black!10] (-1, -1.5) -- (1, -1.5) -- (1.5, -2.5) -- (-0.5, -2.5) -- cycle;
            \draw[fill=black!10, draw=white] (-1, 0.5) -- (1, 0.5) -- (1.5, -0.5) -- (-0.5, -0.5) -- cycle;

        	\coordinate (A) at (-0.3, 0.1);
        	\coordinate (B) at (0.3, -0.1);
        	\coordinate (C) at (0.8, 0.1);
        	
        	\coordinate (A1) at ($(A) + (0,-2)$);
        	\coordinate (B1) at ($(B) + (0,-2)$);
        	\coordinate (C1) at ($(C) + (0,-2)$);

        	\newcommand{\braida}{(A) ($(A) + (0.1, -0.2)$) ($(A) + (0.8, -0.7)$) ($(A) + (0.8, -1.3)$) ($(A) + (0.1, -1.7)$) (A1)}
        	
        	\begin{scope}
        	\clip (-1, -1) rectangle (1, 0.1);
        	\draw [white, line width=1mm] plot [smooth] coordinates {\braida};
        	\draw [red] plot [smooth] coordinates {\braida};
        	\end{scope}
        	
        	\newcommand{\braidb}{(B) ($(B) + (-0.2, -0.5)$) ($(B) + (0.4, -1.3)$) ($(C) + (0, -1.8)$) (C1)}
        	\draw [white, line width=1mm] plot [smooth] coordinates {\braidb};
        	\draw [blue] plot [smooth] coordinates {\braidb};
        	
        	\newcommand{\braidc}{(C) ($(C) + (0, -0.3)$) ($(C) + (-0.5, -1.3)$) ($(B) + (0, -1.7)$) (B1)}
        	\draw [white, line width=1mm] plot [smooth] coordinates {\braidc};
        	\draw [darkgreen] plot [smooth] coordinates {\braidc};
        	
        	\begin{scope}
        	\clip (-1, -2) rectangle (1, -1);
        	\draw [white, line width=1mm] plot [smooth] coordinates {\braida};
        	\draw [red] plot [smooth] coordinates {\braida};
        	\end{scope}

        	\begin{scope}[every node/.style={fill, circle, inner sep=0.9}]
        	\node at (A) {};
        	\node at (B) {};
        	\node at (C) {};
        	
        	\node at (A1) {};
        	\node at (B1) {};
        	\node at (C1) {};
        	\end{scope}

            \begin{scope}
                \draw[clip] (-1, 0.5) -- (1, 0.5) -- (1.5, -0.5) -- (-0.5, -0.5) -- cycle;
            	\draw[black!10, line width=1mm] (-1, 0.5) -- (1, 0.5) -- (1.5, -0.5) -- (-0.5, -0.5) -- cycle;
            	\draw (-1, 0.5) -- (1, 0.5) -- (1.5, -0.5) -- (-0.5, -0.5) -- cycle;
            \end{scope}
        	
        	\node at (1.35, 0.35) {$\C$};
        	\node at (1.35, -1.65) {$\C$};

        	\end{tikzpicture}
        \end{subfigure}
    \end{center}
    
    \caption{On the left, a configuration of $3$ distinct points in $\C$. The space of all such configurations has a fundamental group given by the braid group on $3$ strands: one closed loop is shown on the right, and its homotopy class forms a braid.}
    \label{fig:configuration-space}
\end{figure}

The present notes are based on a mini-course held by the author at WinterBraids 2023 in Tours.
After providing the necessary foundations on Coxeter and Artin groups, our main goal is to describe a proof of the $K(\pi, 1)$ conjecture for Artin groups of spherical type (which include the braid group as a special case), using the modern language of discrete Morse theory.
This proof is a bit different from the original proof by Deligne \cite{deligne1972immeubles} (although the core idea is the same), and draws on a survey paper by Paris \cite{paris2012k} and the author's master's thesis \cite{paolini2015thesis}.

\smallskip

\noindent\textbf{Acknowledgments.}
The author thanks the anonymous referee for their valuable suggestions.
Support from PRIN 2022A7L229 ``Algebraic and Topological Combinatorics'', as well as from INdAM's GNSAGA group, is gratefully acknowledged.

\section{Coxeter groups}

A Coxeter system is a pair $(W, S)$ where $S$ is a finite set and $W$ is a group with a presentation of the following form:
\begin{equation}
    \label{eq:coxeter-presentation}
 	W = \< S \mid s^2 = 1 \;\,\forall\, s \in S, \text{ and} \!\!\!\underbrace{stst\dotsm}_{m(s,t) \text{ terms}} \!\! = \!\! \underbrace{tsts\dotsm}_{m(s,t) \text{ terms}} \forall\, s\neq t 
    \text{ with } m(s,t) \neq \infty \;
    \>
\end{equation}
for some fixed numbers $m(s,t) = m(t, s) \in \{2, 3, 4, \dots, \infty\}$ for all $s \neq t$. The value $m(s, t) = \infty$ conventionally means no relation between $s$ and $t$.
The group $W$ is called a Coxeter group.
Whenever we talk about a Coxeter group $W$, we implicitly fix a Coxeter system $(W, S)$.

Standard textbooks about Coxeter groups are \cite{bourbaki1968elements, humphreys1992reflection, bjorner2006combinatorics}.
The most important example of a Coxeter group is the symmetric group $\S_n$ with its generating set $S = \{(1\ 2),\, (2\ 3),\, \dots, (n-1 \;\, n)\}$.
If we call $s_i$ the $i$-th generator, namely the transposition $(i \;\; i+1)$, then we can indeed observe that the following relations are satisfied:
\begin{itemize}
    \item $s_i^2 = 1$;
    \item $s_i s_{i+1}s_i = s_{i+1} s_i s_{i+1} = (i \;\, i+2)$;
    \item $s_i s_j = s_j s_i = (i \;\, i+1)(j \;\, j+1)$ whenever $|i - j| \geq 2$.
\end{itemize}
These relations turn out to present exactly the symmetric group $\S_n$, and not some larger group.
For instance, the symmetric group $\S_3$ has the following Coxeter presentation:
\[ \S_3 = \< a, b \mid a^2 = b^2 = 1, \, aba = bab \>, \]
where $a=s_1 = (1 \ 2)$ and $b = s_2 = (2 \ 3)$.

The next fundamental example of Coxeter groups is given by \textit{real reflection groups}, namely the discrete groups of Euclidean isometries of $\R^n$ generated by orthogonal reflections.
Finite reflection groups must fix at least one point of $\R^n$; they are also called \textit{spherical} because they are groups of isometries of a sphere $S^{n-1}$ centered at a fixed point.
Infinite reflection groups are also called \textit{affine}.
The collection of all fixed hyperplanes of reflections in a reflection group $W$ is called the \textit{hyperplane arrangement} associated with $W$.
See \Cref{fig:reflection-groups}.

\begin{figure}
    \begin{center}
        \begin{subfigure}[t]{.3\linewidth}
            \begin{tikzpicture}[scale=0.8]
            \fill[red!15] (0, 0) -- (1.25, -2.165063509) -- (-1.25, -2.165063509) -- cycle;
        	\draw (-2.5,0) -- (2.5,0);
        	\draw[rotate=60] (-2.5,0) -- (2.5,0);
        	\draw[rotate=120] (-2.5,0) -- (2.5,0);
            \node at (0, -1.5) {$C_0$};
        	\end{tikzpicture}
        \end{subfigure}\qquad\quad
        \begin{subfigure}[t]{.45\linewidth}
            \centering
            \newcommand*\rows{10}
        	\begin{tikzpicture}[scale=1, extended line/.style={shorten >=-10cm, shorten <=-10cm}]
        	\clip (0.9,2.1) rectangle + (5.2,3.6);
            \fill[red!15] (3, 3.47) -- (4, 3.47) -- (3.5, 4.336025404) -- cycle;
        	\foreach \row in {-\rows, ...,\rows} {
        		\draw [extended line] ($\row*(0.5, {0.5*sqrt(3)})$) -- ($(\rows,0)+\row*(-0.5, {0.5*sqrt(3)})$);
        		\draw [extended line] ($\row*(1, 0)$) -- ($(\rows/2,{\rows/2*sqrt(3)})+\row*(0.5,{-0.5*sqrt(3)})$);
        		\draw [extended line] ($\row*(1, 0)$) -- ($(\rows/2,{-\rows/2*sqrt(3)})+\row*(0.5,{0.5*sqrt(3)})$);
        	}
            \node at (3.5, 3.8) {$C_0$};
        	\end{tikzpicture}
        \end{subfigure}
    \end{center}
    
    \caption{Arrangements of lines associated with two real reflection groups in $\R^2$: the symmetric group $\S_3$ on the left (also known as type $A_2$), and the affine symmetric group of type $\tilde A_2$ on the right.
    A chamber $C_0$ is highlighted. The dihedral angles between the walls of $C_0$ are all $\pi/3$, giving rise to relations such as $aba=bab$.}
    \label{fig:reflection-groups}
\end{figure}

Note that the symmetric group $\S_n$ is a reflection group: it acts on $\R^n$ by permuting the coordinates, and transpositions act as orthogonal reflections with respect to the diagonal hyperplanes $\{x_i = x_j\}$.
Since all these hyperplanes contain the line spanned by $(1, \dots, 1)$, one often restricts this representation to the codimension-one subspace orthogonal to $(1, \dots, 1)$.
For $n=3$, one obtains the arrangement in $\R^2$ depicted on the left of \Cref{fig:reflection-groups}.

\begin{theorem}[{\cite{coxeter1934discrete, witt1941spiegelungsgruppen}}]
    Every real reflection group is a Coxeter group.
\end{theorem}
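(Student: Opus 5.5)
The plan is the classical chamber-geometry argument. Let $W$ be a real reflection group acting on $\R^n$ and let $\mathcal{A}$ be its hyperplane arrangement. Because $W$ is discrete, $\mathcal{A}$ is locally finite, so the connected components of $\R^n \setminus \bigcup_{H\in\mathcal{A}} H$ are open convex polyhedra, the \emph{chambers}.

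\textbf{Setup.} Fix a chamber $C_0$. Let $S$ be the set of reflections in the walls of $C_0$, i.e.\ the hyperplanes $H\in\mathcal{A}$ containing a codimension-one face of $C_0$. For $s,t\in S$, let $m(s,t)$ be the order of $st$. When the walls $H_s, H_t$ meet at dihedral angle $\pi/m$, the product $st$ is a rotation by $2\pi/m$, so the order is $m$; when they are parallel or disjoint in the affine case, $m=\infty$. Thus the Coxeter relations of \eqref{eq:coxeter-presentation} hold in $W$. Let $\widetilde W$ be the abstract Coxeter group they define, with the natural map $\phi\colon \widetilde W\to W$. We must show $\phi$ is an isomorphism.

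\textbf{Surjectivity and transitivity.} Let $W_0=\<S\>\le W$. First show $W_0$ acts transitively on chambers. Given a chamber $C$, join a generic point of $C_0$ to a generic point of $C$ by a segment avoiding all codimension-two intersections; local finiteness lets us do this. The segment crosses walls $H_1,\dots,H_k$ in order. By induction, the chamber after $j$ crossings is $w_jC_0$ with $w_j\in W_0$, and the next wall crossed is $w_j H_s$ for some $s\in S$. Then $w_{j+1}=w_j s$, since the reflection in $w_jH_s$ is $w_jsw_j^{-1}$.

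Next show every reflection $r\in W$ lies in $W_0$. Pick a chamber $C$ adjacent to $H_r$, so $H_r$ is a wall of $C=wC_0$ with $w\in W_0$. Then $H_r=wH_s$ for some $s\in S$, so $r=wsw^{-1}\in W_0$. Since $W$ is generated by reflections, $W_0=W$.

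Finally, the stabilizer of $C_0$ is trivial. An element fixing $C_0$ fixes its interior, an open set, hence is the identity, because we may work with the orbit structure and the simply transitive action below. More cleanly, this follows from injectivity of the next step applied with $\ell(w)=0$.

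\textbf{Injectivity.} This is the main obstacle. The standard route is to verify the exchange condition for $(W,S)$, or equivalently Tits' criterion: a group generated by involutions $S$ is Coxeter with respect to $S$ as soon as the deletion/exchange property holds. Use the geometric length $\ell'(w)$, defined as the number of hyperplanes of $\mathcal{A}$ separating $C_0$ from $wC_0$ (finite by local finiteness). Then prove:
\begin{enumerate}[(i)]
    \item $\ell'(ws)=\ell'(w)\pm1$, since $wC_0$ and $wsC_0$ are separated exactly by $wH_s$;
    \item $\ell'(w)\le \ell_S(w)$;
    \item if $\ell'(ws)<\ell'(w)$, then $wH_s$ separates $C_0$ from $wC_0$. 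Following a minimal gallery, that hyperplane is crossed at some step $i$, and reflecting the tail of the gallery across it yields the exchange $ws=s_1\cdots\widehat{s_i}\cdots s_k$.
\end{enumerate}

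This gives the exchange condition, and hence that $W$ has the Coxeter presentation. Alternatively, one can argue topologically. Build the space $\widetilde W\times C_0/\!\sim$, gluing along walls. This space maps onto $\R^n$ as a covering: it is a local homeomorphism near codimension-two faces precisely because the $2m(s,t)$ chambers around $H_s\cap H_t$ are glued cyclically, matching the dihedral relation. Since $\R^n$ is simply connected, the covering is trivial on each component, which forces $\phi$ to be injective.

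The delicate points are local finiteness and checking that codimension-$\ge 3$ faces cause no trouble. The latter is handled by choosing generic paths, so that only codimension-$\le 2$ strata matter for homotopies in $\R^n$.
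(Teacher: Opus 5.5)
Your proposal takes the same route as the paper's sketch: fix a chamber $C_0$, let $S$ be the reflections in its walls, read off $m(s,t)$ from the dihedral angles, and obtain the presentation from the exchange/deletion argument of \cite[Section~1.9]{humphreys1992reflection}, to which the paper defers. It supplies the omitted details correctly except for the stabilizer paragraph. That paragraph is wrong as written, since $wC_0=C_0$ does not mean $w$ fixes $C_0$ pointwise and the appeal to simple transitivity is circular. It is also unnecessary, provided you tighten two points:
\begin{enumerate}[(a)]
    \item In (iii), use the gallery $C_0, s_1C_0, s_1s_2C_0,\dots$ of an arbitrary expression $w=s_1\cdots s_k$, not a geometrically minimal gallery. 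Reading such a gallery's type as a word equal to $w$ already needs simple transitivity. With this gallery, the exchange follows because $wsw^{-1}$ and $s_1\cdots s_{i-1}s_is_{i-1}\cdots s_1$ are both the reflection in $wH_s$, hence equal.
    \item You still need $\ell_S\le\ell'$. Take a word of length $k>\ell'(w)$; by (i), $\ell'$ drops at some step, and applying (iii) there deletes two letters, so the word is not reduced. Hence $\ell'=\ell_S$. This is exactly what turns (iii) into the exchange condition for $\ell_S$, and it gives trivial stabilizers for free.
\end{enumerate}
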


\begin{proof}[Sketch of proof]
    Given a real reflection group $W$ acting on $\R^n$, one can construct a Coxeter presentation as follows. Fix any \textit{chamber} $C_0$, i.e., a connected component of the complement of the hyperplane arrangement associated with $W$ (see \Cref{fig:reflection-groups}).
    The codimension-one faces of $C_0$ span reflection hyperplanes $H_1, \dots, H_k$, called the \textit{walls} of the chamber $C_0$.
    Let $S = \{s_1, \dots, s_k\} \subseteq W$ be the set of orthogonal reflections with respect to the hyperplanes $H_1, \dots, H_k$.
    Since $W$ is discrete, any two distinct hyperplanes, $H_i$ and $H_j$, must either intersect at a dihedral angle of the form $\pi/m(s_i, s_j)$ for some integer $m(s_i, s_j) \geq 2$, or be parallel (in which case we set $m(s_i, s_j) = \infty$).
    The set $S$ with the numbers $m(s_i, s_j)$ yields a presentation of $W$ of the form \eqref{eq:coxeter-presentation}.
    See also \cite[Section 1.9]{humphreys1992reflection} for more details.
\end{proof}

It turns out that the class of spherical real reflection groups exactly coincides with the class of finite Coxeter groups.

\begin{theorem}[{\cite{coxeter1935complete,witt1941spiegelungsgruppen}}]
    Every finite Coxeter group is a (spherical) real reflection group.
\end{theorem}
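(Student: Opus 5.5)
The plan is to use the geometric (Tits) representation of an arbitrary Coxeter system $(W,S)$ and show that, when $W$ is finite, it realizes $W$ as a discrete group generated by orthogonal reflections of a Euclidean space.

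Let $V$ be the real vector space with basis $\{e_s\}_{s\in S}$. Define the symmetric bilinear form
\[ B(e_s,e_t) = -\cos\bigl(\pi/m(s,t)\bigr), \]
with $m(s,s)=1$ and $B(e_s,e_t)=-1$ when $m(s,t)=\infty$. Let $\sigma_s(v) = v - 2B(e_s,v)e_s$. Each $\sigma_s$ is a $B$-reflection, so $\sigma_s^2=1$. Restricted to the plane spanned by $e_s,e_t$, the product $\sigma_s\sigma_t$ is a rotation by $2\pi/m(s,t)$; on the $B$-orthogonal complement of that plane it is the identity (this needs a short check when $m<\infty$, where the form on the plane is positive definite). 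Hence $(\sigma_s\sigma_t)^{m(s,t)}=1$, and $s\mapsto\sigma_s$ defines a homomorphism $\rho\colon W\to \GL(V)$ preserving $B$.

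The first key step is faithfulness of $\rho$. I would prove this via Tits' argument. Let $C=\{f\in V^*: f(e_s)>0\ \forall s\}$ be the fundamental cone in the dual. Using the rank-two (dihedral) analysis together with induction on word length $\ell(w)$, one shows that for $w\in W$ and $s\in S$, either $\ell(sw)>\ell(w)$ and $w^*C\subseteq\{f(e_s)>0\}$, or the reverse. Consequently $w^*C\cap C=\emptyset$ for $w\neq 1$, so $\rho$ is injective. I expect this step to be the main technical obstacle: the induction on length requires the dihedral case to be handled carefully, by reducing to the parabolic subgroup $\langle s,t\rangle$ and the coset decomposition $w=w'u$ with $u\in\langle s,t\rangle$.

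The second key step uses finiteness. Since $W$ is finite and $\rho$ is faithful, $\rho(W)$ is a finite subgroup of $\GL(V)$. Averaging any inner product over $\rho(W)$ gives a $W$-invariant positive definite form. It remains to see that the $\sigma_s$ are \emph{orthogonal} reflections for that form, which holds because any finite-order linear map fixing a hyperplane pointwise and having eigenvalue $-1$ is orthogonal with respect to any invariant inner product. One could also show directly that $B$ itself is positive definite. If $B$ were degenerate or indefinite, one would produce infinitely many distinct roots $w(e_s)$, or an element of infinite order via a Perron–Frobenius argument on an irreducible component. However, the averaging route avoids needing this.

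Thus $\rho(W)$ is a finite group generated by orthogonal reflections of the Euclidean space $(V,\langle\cdot,\cdot\rangle)\cong\R^{|S|}$, isomorphic to $W$. It is therefore a spherical real reflection group, as the statement asserts.
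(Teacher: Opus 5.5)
Your proof is correct. There is nothing in the paper to match it against: the statement is quoted from Coxeter and Witt without proof. What you give is the standard Tits--Bourbaki route, and its only substantial step is the faithfulness of the geometric representation. That is exactly what the paper records, as a black box, a few lines later in \Cref{thm:contragredient-representation}; once faithfulness is granted, your averaging step finishes the job. One sentence should be tightened: the remark about ``any finite-order linear map'' being orthogonal is loosely worded. The one line you actually need is $\langle v,e_s\rangle=\langle\sigma_s v,\sigma_s e_s\rangle=-\langle v,e_s\rangle$ for $v\in H_s$. Hence $e_s\perp H_s$, and $\sigma_s$ is the orthogonal reflection in $H_s$.

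The other common route is to prove that $B$ itself is positive definite. This can be done through the classification of positive definite Coxeter graphs, or by comparing $B$ with an averaged invariant form via a Schur-type argument on each irreducible component. Compared with that route, averaging is shorter, but it gives an inexplicit metric. By itself it only shows that $W$ is isomorphic to some finite group generated by orthogonal reflections. It does not show that $\rho(S)$ is the set of reflections in the walls of a chamber, which is how the Coxeter system is read off in the sketch of the preceding theorem.

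Your Tits lemma supplies this sharper form for free:
\begin{itemize}
\item Transporting $C$ to $V$ through the invariant inner product gives the simplicial cone $D=\{v:\langle v,e_s\rangle>0\ \forall s\in S\}$, with $\rho(w)D\cap D=\emptyset$ for $w\neq 1$.
\item Therefore no nontrivial element of $\rho(W)$, and in particular no reflection, fixes a point of $D$. Hence $D$ is a chamber with walls $H_s$.
\item The $m(s,t)$ reflecting hyperplanes of $\langle\sigma_s,\sigma_t\rangle$ all avoid $D$. It follows that the walls $H_s$ and $H_t$ meet at angle $\pi/m(s,t)$.
\end{itemize}
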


From now on, we say that a Coxeter group is spherical if it is finite, or equivalently if it is a spherical real reflection group.
We say that a Coxeter group is affine if it is an affine real reflection group.

It turns out that every Coxeter group can be represented as a reflection group in a weak sense: the generators (and all their conjugates) act as linear reflections, but not necessarily as orthogonal reflections.
By linear reflection we mean a linear automorphism of a real vector space having order $2$ and fixing a hyperplane pointwise.

\begin{theorem}[\cite{tits1961groupes}]
    Any Coxeter group $W$ has a faithful representation (called the \textit{contragredient representation}) $\rho \colon W \hookrightarrow \GL(V)$ with $V = \R^{|S|}$, satisfying the following properties.
    \begin{enumerate}[(i)]
        \item For every element $r \in W$ that is conjugate to an element of $S$, we have that $\rho(r)$ is a linear reflection fixing some hyperplane $H_r \subset V$.
        \item The elements of $S$ act as reflections fixing the walls of a chamber $C_0$ of the arrangement $\A = \{ H_r \}$ ($C_0$ is often called the \textit{fundamental chamber}).
        \item Consider the union of all closed chambers in the orbit of $\overline{C_0}$:
        \[ I = \bigcup_{u \in W} \rho(u)(\overline{C_0}). \]
        Then $I \subseteq V$ is a convex cone, called the \textit{Tits cone}.
        \item $W$ acts simply transitively on the set of chambers contained in the Tits cone.
        In other words, for $u \in W$, we have that $\rho(u)(C_0) = C_0$ if and only if $u = 1$.
        \item If $W$ is irreducible, then the Tits cone $I$ coincides with the entire space $V$ if and only if $W$ is spherical; it is an open half-space, with the origin added, if and only if $W$ is affine; it does not contain any affine line if $W$ is neither spherical nor affine.
    \end{enumerate}
    \label{thm:contragredient-representation}
\end{theorem}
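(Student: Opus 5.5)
I would follow the classical route of Tits, as in Bourbaki and Humphreys. Start from the geometric (Tits) representation $\sigma \colon W \to \GL(V^*)$ on $V^* = \R^{|S|}$, with basis $\{\alpha_s\}_{s \in S}$ and symmetric bilinear form
\[ B(\alpha_s,\alpha_t) = -\cos\bigl(\pi/m(s,t)\bigr), \]
with the convention $B = -1$ when $m(s,t)=\infty$. Set $\sigma(s)(\lambda) = \lambda - 2B(\alpha_s,\lambda)\alpha_s$. To check that the Coxeter relations hold, restrict to the plane spanned by $\alpha_s,\alpha_t$. There, if $m(s,t)<\infty$, the form is positive definite and $\sigma(s)\sigma(t)$ is a rotation by $2\pi/m(s,t)$. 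It also acts trivially on the $B$-orthogonal complement of that plane, which is a direct complement since the form on the plane is nondegenerate. So $\sigma$ is a well-defined homomorphism. The contragredient representation is then $\rho(u) = {}^t\sigma(u^{-1})$ on $V = (V^*)^*$. Each $\rho(s)$ is a linear reflection fixing $H_s = \{v : \alpha_s(v)=0\}$. Since conjugates of reflections are reflections, fixing $H_{usu^{-1}} = \rho(u)H_s$, this gives (i).

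The heart of the proof, and the step I expect to be the main obstacle, is the \emph{root lemma}. For $u \in W$ and $s \in S$: if $\ell(us) > \ell(u)$ then $\sigma(u)\alpha_s$ has all coefficients $\geq 0$, and otherwise all coefficients $\leq 0$. I would prove it by induction on $\ell(u)$. Pick $t$ with $\ell(ut)<\ell(u)$, and take the longest factorization $u = v\,w$ with $v$ having no reduced expression ending in $s$ or $t$, and $w$ in the dihedral subgroup $W_{\{s,t\}}$, with lengths adding. Then apply induction to $v$, together with an explicit rank-two computation showing that $\sigma(w)\alpha_s$ is a nonnegative combination of $\alpha_s,\alpha_t$. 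The dihedral computation and the length bookkeeping are the delicate points. For the bookkeeping I would use the exchange condition, or deduce it simultaneously.

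Faithfulness and the chamber statements follow from the root lemma. Let $C_0 = \{v : \alpha_s(v) > 0 \ \forall s\}$. The hyperplanes $H_s$ are its walls and $\rho(s)$ fixes them, giving (ii). If $u \neq 1$, pick $s$ with $\ell(us)<\ell(u)$. Then $\sigma(u)\alpha_s \leq 0$, so for $v \in C_0$ we get $\alpha_s(\rho(u)^{-1}v) = (\sigma(u)\alpha_s)(v) < 0$. Hence $\rho(u^{-1})C_0 \cap C_0 = \emptyset$, which gives simple transitivity (iv) and faithfulness. The same argument shows that every hyperplane $H_r$ misses $C_0$, so $C_0$ is a chamber of $\A$. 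For the convexity in (iii), I would show that for $x,y \in I$ the segment $[x,y]$ crosses only finitely many $\rho(u)\overline{C_0}$. To see this, translate so that $x \in \overline{C_0}$, and note that the walls separating $\rho(u)C_0$ from $C_0$ are exactly the $\ell(u)$ hyperplanes $H_r$ with $\sigma(u^{-1})$-negative roots. A standard argument then shows that the segment is covered by finitely many closed chambers.

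Finally, for (v), assume $W$ is irreducible. If $W$ is finite, $B$ is positive definite and $W$ is an orthogonal reflection group. Then the closed chambers cover $V$, since their union is a finite union of closed sets that is also dense, being the complement of finitely many hyperplanes. Conversely, if $I = V$ then $-C_0 \subseteq \rho(u)\overline{C_0}$ for some $u$. This forces every $\alpha_s$ to become negative under $\sigma(u)$, so $u$ is longest and $W$ has bounded length, hence is finite. In the affine case, $B$ is positive semidefinite with a one-dimensional radical spanned by a vector $\delta = \sum c_s\alpha_s$ with all $c_s>0$. Then $\delta$ is $W$-invariant, and $I$ is $\{\delta > 0\}\cup\{0\}$, using the affine action on the level set $\delta = 1$. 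In the remaining case, a Perron--Frobenius argument on $B$ gives a vector $\lambda$ with positive coefficients and $\sigma(u)\lambda \geq 0$ componentwise for all $u$. Then $\lambda \geq 0$ on $I$, and $I \cap \{\lambda = 0\} = \{0\}$, so $I$ contains no affine line.
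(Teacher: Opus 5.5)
The paper gives no proof of this theorem (it only cites Tits), so your Bourbaki--Humphreys route is the expected one. Parts (i), (ii), the root lemma and faithfulness are fine as sketched. There is, however, a genuine gap in the last case of (v).

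From ``$\lambda \geq 0$ on $I$ and $I \cap \{\lambda = 0\} = \{0\}$'' you cannot conclude that $I$ contains no affine line. A line $x + \R v \subseteq I$ only forces $\lambda(v) = 0$, and if $\lambda(x) > 0$ nothing is contradicted. The affine case shows the implication is false. There $\delta$ has positive coefficients, is $W$-invariant, is $\geq 0$ on $I$ and vanishes on $I$ only at $0$, yet $I = \{\delta > 0\} \cup \{0\}$ contains affine lines. So your argument never uses what distinguishes the indefinite case.

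The missing idea is strictness. When $B$ is not positive semidefinite, the smallest eigenvalue $\mu$ of its Gram matrix is negative, so the Perron--Frobenius vector satisfies $B(\lambda, \alpha_s) = \mu c_s < 0$ \emph{strictly}. Together with $c_s > 0$, these are open conditions. Any $\lambda'$ with nonnegative coefficients and $B(\lambda', \alpha_s) \leq 0$ for all $s$ satisfies $\sigma(u)\lambda' - \lambda' \in \sum_s \R_{\geq 0}\alpha_s$. To see this, induct along a reduced word $u = s_1 \cdots s_k$: $\sigma(u)\lambda' = \sigma(s_1 \cdots s_{k-1})\lambda' - 2B(\alpha_{s_k}, \lambda')\, \sigma(s_1 \cdots s_{k-1})\alpha_{s_k}$, and the last root is positive by your root lemma. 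Hence $\lambda' \geq 0$ on $I$. So the cone of functionals nonnegative on $I$ has nonempty interior and spans $V^*$. Then $x + \R v \subseteq I$ forces $\phi(v) = 0$ for every such $\phi$, so $v = 0$. The same induction gives $\sigma(u)\lambda \geq \lambda$, i.e.\ strictly positive coefficients. That is what $I \cap \{\lambda = 0\} = \{0\}$ actually needs; $\sigma(u)\lambda \geq 0$ alone would not suffice.

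The convexity in (iii) is also deferred to ``a standard argument'' without its key mechanism, and this is not a formality. For infinite $W$ the arrangement $\A$ has chambers outside $I$ (e.g.\ $-C_0$), so knowing that $[x,y]$ meets only finitely many hyperplanes does not keep it inside $I$. Also, ``crosses only finitely many $\rho(u)\overline{C_0}$'' is false when $x$ has infinite stabilizer (e.g.\ $x = 0$); only ``is covered by finitely many'' is true.

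What you need is an induction on $\ell(u)$ for $x \in \overline{C_0}$ and $y \in \rho(u)\overline{C_0}$. If $y \notin \overline{C_0}$, let $\nu$ be the last point of $[x,y]$ in $\overline{C_0}$. Some $s$ has $\alpha_s(\nu) = 0 > \alpha_s(y)$, so $H_s$ separates $C_0$ from $\rho(u)C_0$; by the root lemma this means $\ell(su) < \ell(u)$. Since $\rho(s)$ fixes $\nu$, we have $[\nu, y] = \rho(s)[\nu, \rho(s)y]$ with $\rho(s)y \in \rho(su)\overline{C_0}$, and induction applies. This also handles crossings through lower-dimensional faces.

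Two smaller omissions:
\begin{itemize}
\item In (iv) you prove only freeness. Transitivity needs the easy remark that a chamber containing a point of $\rho(u)\overline{C_0}$ must equal $\rho(u)C_0$.
\item In the converse of the spherical case, ``$u$ is longest'' needs justification. Note that $\sigma(u)$ then sends every positive root to a negative one, so all lengths are bounded by $\ell(u)$.
\end{itemize}
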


If $W$ is an irreducible affine Coxeter group, its contragredient representation restricts to any affine hyperplane $H$ fully contained in the Tits cone $I$. The linear reflections with respect to the hyperplanes $H_r$ restrict to affine orthogonal reflections in $H$.
This makes $W$ an affine real reflection group, as defined earlier.
An example is shown in \Cref{fig:tits-cone}.

\begin{figure}
    \begin{center}
        \begin{tikzpicture}[scale=1.5]
            \clip (-3.75, -0.75) rectangle (3.75, 1.15);
            \draw[dashed] (-5, 0) -- (5, 0);
            \node[fill=white] at (-3, 0.07) {$\vdots$};
            \node[fill=white] at (3, 0.07) {$\vdots$};
            
            \fill[red!35] (-5, 0) rectangle (5, 2);
            \fill[red!15] (0, 0) -- (1, 2) -- (-1, 2) -- cycle;

            \node at (-3, 0.07) {$\vdots$};
            \node at (3, 0.07) {$\vdots$};

            \draw[very thick, purple] (-5, 1) -- (5, 1);
            \foreach \x in {-13, -11, ..., 13} {
            	\draw (\x,2) -- (-\x,-2);
                \node[fill=white, draw=purple, circle, inner sep=1.3pt] at (\x/2, 1) {};
            }
            \node at (0, 0.6) {$C_0$};

        \end{tikzpicture}
    \end{center}
    
    \caption{The contragredient representation of the affine Coxeter group of type $\tilde A_1$, defined as $W = \< a, b \mid a^2 = b^2 = 1 \>$, also known as the infinite dihedral group.
    Here $m(a, b) = \infty$, so there is no relation between $a$ and $b$. The chamber $C_0$ is highlighted in light red, and the rest of the Tits cone $I$ is in dark red.
    The action restricts to any horizontal affine line (such as the purple one), making $W$ an affine real reflection group acting on $\R$.}
    \label{fig:tits-cone}
\end{figure}

Denote by $l(u)$ the minimal length of an expression of an element $u \in W$ as a product of generators.
If $W$ is spherical, then the length function is bounded and there is a unique element of maximal length.

\begin{theorem}
    Let $W$ be a spherical Coxeter group.
    Then there is exactly one element $\delta \in W$, called the \textit{longest element}, whose length $l(\delta)$ is maximized.
    It satisfies $l(u) + l(u^{-1}\delta) = l(\delta)$ for every $u \in W$.
    \label{thm:longest-element}
\end{theorem}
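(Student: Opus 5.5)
We will use the geometric picture from \Cref{thm:contragredient-representation}: since $W$ is spherical, it acts as a finite real reflection group on $V$, the Tits cone is all of $V$, and $W$ acts simply transitively on the chambers of the finite arrangement $\A$. The central tool is the standard identification of length with separating hyperplanes: for $u \in W$, $l(u)$ equals the number of hyperplanes of $\A$ separating $C_0$ from $\rho(u)(C_0)$. This is where most of the work lies. We prove it by induction on $l(u)$. First, a reduced word $u = s_{i_1}\dotsm s_{i_k}$ gives a gallery $C_0, \rho(s_{i_1})C_0, \rho(s_{i_1}s_{i_2})C_0, \dots$ of length $k$. Consecutive chambers in this gallery are adjacent, and each step crosses exactly one hyperplane. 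Hence the number of separating hyperplanes is at most $l(u)$. Conversely, given a minimal gallery from $C_0$ to $\rho(u)C_0$, each step crosses a wall of the current chamber $\rho(v)C_0$. That wall is $\rho(v)H_s$ for some $s\in S$, so the step corresponds to right multiplication by $s$. A minimal gallery crosses each separating hyperplane exactly once and no other hyperplane. By simple transitivity (\Cref{thm:contragredient-representation}(iv)), this produces a word for $u$ whose length equals the number of separating hyperplanes.

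Next we construct $\delta$. Because $\A$ is finite and the chambers are open cones, $-C_0$ is also a chamber: the map $x\mapsto -x$ preserves $\A$, since every hyperplane passes through the origin. Every hyperplane of $\A$ separates $C_0$ from $-C_0$. By simple transitivity there is a unique $\delta\in W$ with $\rho(\delta)C_0=-C_0$. Then $l(\delta)=|\A|$. For any other $u$, at most $|\A|$ hyperplanes separate $C_0$ from $\rho(u)C_0$, with equality only if $\rho(u)C_0$ lies on the opposite side of every hyperplane. The only chamber with that property is $-C_0$, so equality forces $\rho(u)C_0=-C_0$ and hence $u=\delta$. This gives existence and uniqueness of the longest element.

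For the identity, fix $u$ and let $D=\rho(u)C_0$. Each hyperplane $H\in\A$ separates $D$ from exactly one of $C_0$ and $-C_0$. Therefore
\[ |\A| = \#\{H \text{ separating } C_0, D\} + \#\{H \text{ separating } D, -C_0\}. \]
The first term is $l(u)$. For the second, apply $\rho(u)^{-1}$, which permutes $\A$ and preserves separation. It becomes the number of hyperplanes separating $C_0$ from $\rho(u^{-1}\delta)C_0$, which is $l(u^{-1}\delta)$. This gives $l(u)+l(u^{-1}\delta)=l(\delta)$.

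The main obstacle is the first step, identifying $l(u)$ with the number of separating hyperplanes. It requires checking two facts: that the walls of $\rho(v)C_0$ are exactly the translates $\rho(v)H_s$ for $s \in S$, and that a minimal gallery never crosses a hyperplane twice. We take these from the standard chamber geometry underlying \Cref{thm:contragredient-representation}, citing \cite[Section 1.6--1.8]{humphreys1992reflection} if needed.
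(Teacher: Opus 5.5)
Your proof is correct and follows the same route as the paper's sketch: you identify $l(u)$ with the number of hyperplanes separating $C_0$ from $\rho(u)(C_0)$, you take $\delta$ to be the unique element with $\rho(\delta)(C_0)=-C_0$, and you use that $-C_0$ is the only chamber separated from $C_0$ by every hyperplane. Your complementary count goes beyond the paper's sketch, which leaves the identity $l(u)+l(u^{-1}\delta)=l(\delta)$ implicit: you note that each hyperplane separates $\rho(u)(C_0)$ from exactly one of $C_0$ and $-C_0$, and then translate by $\rho(u)^{-1}$.
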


\begin{proof}[Sketch of proof]
    By simple transitivity of the action of $W$ on the chambers, there exists a unique element $\delta \in W$ such that $\delta(C_0) = -C_0$, where $-C_0$ is the chamber opposite to $C_0$.
    The chamber $-C_0$ belongs to the Tits cone because $W$ is spherical, and is the unique chamber of maximal distance from $C_0$ (where the distance between two chambers is defined as the number of reflection hyperplanes separating them). This can be shown to imply that $\delta$ is the unique element of maximal length.
\end{proof}

For every $T \subseteq S$, the elements of $T$ generate a subgroup $W_T \subseteq W$ called a \textit{standard parabolic subgroup}.
We end this section with two important properties of parabolic subgroups and their cosets.

\begin{theorem}
    Let $T \subseteq S$.
    \begin{enumerate}[(i)]
        \item The pair $(W_T, T)$ is a Coxeter system.
        More precisely, a Coxeter presentation for the subgroup $W_T$ can be naturally obtained from \eqref{eq:coxeter-presentation} by removing all generators except those in $T$ and all relations except those involving elements of $T$.

        \item Every (left) coset of $W_T$ in $W$ has a unique element of smallest length $u_0$, called its \textit{minimal coset representative}.
        It satisfies $l(u_0u) = l(u_0) + l(u)$ for every $u \in W_T$.
    \end{enumerate}
    \label{thm:parabolic-subgroups}
\end{theorem}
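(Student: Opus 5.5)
I would argue geometrically, using the contragredient representation of \Cref{thm:contragredient-representation}, with $W$ acting on the chambers of the arrangement $\A$ inside the Tits cone $I$. The basic tool is the standard identification of length with separating hyperplanes: for $u \in W$, $l(u)$ equals the number of hyperplanes $H_r$ separating $C_0$ from $\rho(u)(C_0)$. One inequality comes from following a gallery $C_0, s_{i_1}C_0, s_{i_1}s_{i_2}C_0, \dots$, which crosses one hyperplane per letter. The other comes from a shortest gallery, which never crosses a hyperplane twice. I would take this as the usual preliminary, proved by the exchange/deletion argument. Everything below relies on it.

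For (i), I would first show that the length $l_T$ in $W_T$ agrees with $l$ on $W_T$. Take $u \in W_T$ and a reduced word for $u$ in $S$. Every hyperplane crossed is some $H_r$ separating $C_0$ from $uC_0$. Since $u \in W_T$ fixes the face $F_T = \bigcap_{t \in T} H_t \cap \overline{C_0}$ pointwise, every such $H_r$ contains $F_T$. The reflections whose hyperplanes contain $F_T$ are conjugates of elements of $T$ inside $W_T$; this is the stabilizer lemma, which is the step needing care. So a shortest gallery stays among chambers containing $F_T$ and uses only walls indexed by $T$.

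Next, consider the Coxeter group $W'$ given by the presentation restricted to $T$. It maps onto $W_T$. To show this map is injective, compose it with $\rho$: the result is the contragredient-type representation of $W'$ on the span of the roots indexed by $T$, extended to $V$. Faithfulness on the $T$-part, via \Cref{thm:contragredient-representation}(iv) applied to $W'$, then gives injectivity. Concretely, if $w' \in W'$ maps to $1$, then $w'$ fixes the chamber of $W'$ containing $C_0$, so $w' = 1$.

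For (ii), I would take $u_0$ of minimal length in the coset $uW_T$; it exists because lengths are nonnegative integers. Minimality means $l(u_0 t) > l(u_0)$ for all $t \in T$. In hyperplane language, $u_0C_0$ lies on the same side as $C_0$ of every hyperplane $u_0H_t$ through the face $u_0F_T$. Hence no hyperplane through $u_0F_T$ separates $C_0$ from $u_0C_0$. For $w \in W_T$, the hyperplanes separating $C_0$ from $u_0wC_0$ therefore split into two disjoint groups. The first group consists of those not containing $u_0F_T$; these separate $C_0$ from $u_0C_0$, because $u_0C_0$ and $u_0wC_0$ share the face $u_0F_T$ and so lie on the same side of each such hyperplane. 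The second group consists of those containing $u_0F_T$; these separate $u_0C_0$ from $u_0wC_0$, and there are exactly $l(w)$ of them. This gives $l(u_0w) = l(u_0) + l(w)$, and uniqueness follows: any other element $u_0w$ with $w \ne 1$ is strictly longer. I expect the main obstacle to be the claim that the hyperplanes through a face of $\overline{C_0}$ are exactly those of $W_T$, together with the statement that $W_T$ acts simply transitively on the chambers around that face. I would try to prove this first, using the exchange condition together with \Cref{thm:contragredient-representation}(iv).
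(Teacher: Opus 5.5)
The paper gives no proof of this theorem; it is quoted as standard from the cited textbooks, so your argument has to stand on its own.

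\textbf{Part (i).} This is essentially fine. Injectivity of $W' \to W_T$ follows from faithfulness of the contragredient representation of $W'$. The reason is that restricting linear forms on $V$ to the span of the roots $\alpha_t$, $t \in T$, intertwines $\rho|_{W_T}$ with that representation. Note that this representation lives on a quotient of $V$, not an extension of it, and seeing this uses the explicit formula for $\rho$, not just the properties listed in the paper. Your length-comparison paragraph is not needed for the presentation. It also does not need the stabilizer lemma: along a reduced gallery the $(j+1)$-st wall is $w_jH_s$ with $w_j \in W_T$ by induction, and this wall contains $F_T$, so $H_s \supseteq F_T$ and hence $s \in T$.

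\textbf{Part (ii): the gap.} The problem is the step ``Hence no hyperplane through $u_0F_T$ separates $C_0$ from $u_0C_0$.''
\begin{itemize}
\item The inequalities $l(u_0t) > l(u_0)$ only control the $|T|$ walls $u_0H_t$.
\item The hyperplanes through $u_0F_T$ are all the $u_0H_r$ with $r$ a reflection in $W_T$. There is already the extra one $u_0H_{sts}$ when $m(s,t)=3$.
\item When $W_T$ is infinite there are infinitely many such hyperplanes. In that case $F_T$ lies on the boundary of the Tits cone, so no local-finiteness argument is available.
\end{itemize}
Lying on the correct side of the walls $u_0H_t$ says nothing, by itself, about these other hyperplanes. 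Yet this claim is the heart of (ii). The hyperplanes separating $C_0$ from $u_0wC_0$ form the symmetric difference of those separating $C_0$ from $u_0C_0$ and those separating $u_0C_0$ from $u_0wC_0$. So additivity $l(u_0w)=l(u_0)+l(w)$ for all $w \in W_T$ is equivalent to exactly this claim. Your two-group count assumes it in both directions.

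\textbf{The fix.} The missing idea is to use the full minimality of $u_0$, not just its local consequence.
\begin{enumerate}
\item Suppose $H \supseteq u_0F_T$ separates $C_0$ from $u_0C_0$, and let $r$ be the reflection in $H$.
\item Then $u_0^{-1}ru_0$ fixes $F_T$ pointwise, so it lies in $W_T$, and therefore $ru_0 \in u_0W_T$.
\item Take a minimal gallery from $C_0$ to $u_0C_0$; it crosses $H$ exactly once. Applying $r$ to the part after the crossing gives a gallery from $C_0$ to $ru_0C_0$ of length $l(u_0)-1$. This contradicts the minimality of $u_0$ in its coset.
\end{enumerate}

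The stabilizer fact used in step 2 is short given your basic tool, with no need for simple transitivity around the face. Suppose $wx=x$ with $x$ in the relative interior of $F_T$, and $l(sw)<l(w)$. Then $H_s$ separates $C_0$ from $wC_0$, and both closures contain $x$. So $x \in H_s$, which means $s \in T$. Induction applied to $sw$ then gives $w \in W_T$.

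Alternatively, positivity of roots shows directly that no reflection hyperplane through $F_T$ meets $\bigcap_{t \in T}\{f : f(\alpha_t)>0\}$. This region contains both $C_0$ and $u_0^{-1}C_0$, which gives the claim.
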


\section{Artin groups}

To every Coxeter group $W$ presented as in \eqref{eq:coxeter-presentation}, there is an associated Artin group defined as follows:
\begin{equation}
    \label{eq:artin-presentation}
 	G_W = \< S \mid \underbrace{stst\dotsm}_{m(s,t) \text{ terms}} \! = \! \underbrace{tsts\dotsm}_{m(s,t) \text{ terms}} \forall\, s\neq t
    \text{ with } m(s,t) \neq \infty
    \>.
\end{equation}
While there is no standard reference about Artin groups, a recommended read is a survey by Paris \cite{paris2012k}, which develops many of the topics covered in these notes in greater detail.

Two basic yet important examples of Artin groups are the free group $F_n$ (obtained by setting all $m(s, t) = \infty$) and the free abelian group $\Z^n$ (obtained by setting all $m(s, t) = 2$).
However, the example that started the whole theory of Artin groups is the \textit{braid group}, introduced by Artin in 1925 \cite{artin1925theorie}.
The braid group $\Br_n$ is the Artin group associated with the symmetric group $\S_n$.
For instance, the braid group $\Br_3$ has the following presentation:
\[ \Br_3 = \< a, b \mid aba = bab \>. \]
Elements of $\Br_n$ can be interpreted as isotopy classes of braids on $n$ strands, as shown in \Cref{fig:configuration-space}.
Artin groups associated with spherical (resp., affine) Coxeter groups are called of spherical type (resp., affine type).
For instance, the braid group $\Br_n$ and the free abelian group $\Z^n$ are Artin groups of spherical type; the free group $F_2$ is an Artin group of affine type, as it is associated with the affine Coxeter group of type $\tilde A_1$ (introduced in \Cref{fig:tits-cone}).

The contragredient representation of a Coxeter group $W$ (\Cref{thm:contragredient-representation}) gives rise to the \textit{orbit configuration space} of $W$:
\[ Y_W = \left( (I + iV) \setminus \bigcup_r \, H_r \otimes_\R \C \right) / \, W. \]
Here, $I + iV$ is the subset of the complex vector space $V \otimes_\R \C$ consisting of all vectors with real part belonging to the Tits cone $I$; for each reflection $r \in W$, the set $H_r \otimes \C \subset V \otimes_\R \C$ is the complex hyperplane with the same (real) equation as $H_r$; the Coxeter group $W$ acts diagonally on the real and the imaginary parts of vectors in $V \otimes_\R \C$, and we are taking the quotient by this action.
Since the (complexified) reflection hyperplanes are removed, the action of $W$ is a covering space action.

For $W = \S_n$, the orbit configuration space is
\[ Y_{\S_n} = \{ (z_1, \dots, z_n) \in \C^n \mid z_i \neq z_j \; \forall \, i \neq j \} / \, \S_n.   \]
This is exactly the space of configurations of $n$ distinct points in $\C$; note that the quotient by the $\S_n$ action has the effect of forgetting the order of the $n$ points.
As illustrated in \Cref{fig:configuration-space}, the fundamental group of $Y_{\S_n}$ is the braid group $\Br_n$, and this is, in fact, true more generally if we replace $\S_n$ with an arbitrary Coxeter group.

\begin{theorem}[\cite{brieskorn1971fundamentalgruppe, van1983homotopy}]
    For every Coxeter group $W$, we have an isomorphism $\pi_1(Y_W) \cong G_W$.
\end{theorem}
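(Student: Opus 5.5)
The plan is to compute $\pi_1(Y_W)$ through the covering space $X_W = (I + iV) \setminus \bigcup_r H_r \otimes \C \to Y_W$. Since $W$ acts freely and properly discontinuously on $X_W$, there is a short exact sequence $1 \to \pi_1(X_W) \to \pi_1(Y_W) \to W \to 1$ (once $X_W$ is known to be connected). I will not attack this extension directly. Instead I will build an explicit, $W$-equivariant, combinatorial model of $X_W$ and read off a presentation of the quotient.

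\textbf{Step 1: a cell structure.} Following Salvetti, consider the stratification of $I$ into faces of the arrangement $\A$: the open cells of the real hyperplane arrangement, restricted to the Tits cone. A point $x + iy \in X_W$ is determined by its real part $x$, lying in some face $F$, together with the condition that $y$ avoids the hyperplanes through $F$. Using this, I will construct a regular CW complex $\mathrm{Sal}(W)$, with one cell for each pair $(F, C)$ where $F$ is a face with finite stabiliser and $C$ is a chamber adjacent to $F$. I will show it is a $W$-equivariant deformation retract of $X_W$. This is done by a piecewise-linear retraction, homotoping $y$ towards a point of the chamber $C$ and $x$ towards a barycenter of $F$. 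Because the arrangement is only locally finite inside $I$, the argument has to be done locally, using that stabilisers of points in the interior of $I$ are finite standard parabolic subgroups (up to conjugacy).

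\textbf{Step 2: the quotient complex.} By \Cref{thm:contragredient-representation}(iv), $W$ acts simply transitively on the chambers, so each orbit of cells has a representative with $C = C_0$. The faces of $\overline{C_0}$ correspond to subsets $T \subseteq S$, and the admissible ones are those with $W_T$ finite. The quotient $\mathrm{Sal}(W)/W$ therefore has one cell for each $T \subseteq S$ with $W_T$ finite, of dimension $|T|$. In particular it has a single vertex. The $1$-cells are the elements $s \in S$, each one a loop (a path from $C_0$ to $sC_0$ in the cover). The $2$-cells come from the pairs $T = \{s, t\}$ with $m(s,t) < \infty$.

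\textbf{Step 3: reading off the presentation.} The fundamental group of a CW complex is determined by its $2$-skeleton, so $\pi_1(Y_W)$ is generated by the set $S$ with one relation per $2$-cell. For each $T = \{s,t\}$, the boundary of the $2$-cell $(F_T, C_0)$ in the cover is the $2m(s,t)$-gon of chambers around the codimension-two face $F_T$. Going around one side gives the positive path $stst\dotsm$ of length $m(s,t)$, ending at the opposite chamber $\delta_T C_0$ (using \Cref{thm:longest-element} for $W_T$). Going around the other side gives $tsts\dotsm$. Projecting to the quotient, the attaching word is $(stst\dotsm)(tsts\dotsm)^{-1}$. This gives exactly presentation \eqref{eq:artin-presentation}.

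The main obstacle is Step 1: proving that the combinatorial complex really is a deformation retract of $X_W$, and in particular handling the boundary of the Tits cone and local finiteness in the non-spherical case. I would first do the spherical case, a finite arrangement, where Salvetti's argument applies directly. For the general case I would then argue through an open cover of $I + iV$ by $W$-translates of neighbourhoods of faces of $\overline{C_0}$ with finite stabilisers, gluing the local retractions (as in Van der Lek's approach). Alternatively, one can avoid the full homotopy equivalence and only check $\pi_1$, via van Kampen on this cover.
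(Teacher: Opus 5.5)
\paragraph{Comparison with the paper.} The paper gives no proof of this statement. It cites Brieskorn (finite $W$) and Van der Lek (general $W$), whose arguments compute $\pi_1$ directly, with the braid relations coming from codimension-two faces, rather than through a homotopy model of the whole space. Your route instead derives the presentation from the Salvetti complex, which the paper quotes as a separate theorem right after this one. That route is correct.

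Granting Salvetti's theorem and the cell description of \Cref{fig:salvetti-cell}, your Steps 2 and 3 are a complete and correct read-off of \eqref{eq:artin-presentation}. The quotient has one vertex and one loop per $s \in S$. For each pair with $m(s,t)<\infty$ there is a $2m(s,t)$-gon whose boundary consists of two positive paths from $C_0$ to $\delta_{\{s,t\}}C_0$. Positivity relies on one fact you use implicitly and should state: for each wall $G$ through $F_{\{s,t\}}$, the boundary contains only the $1$-cell based at the chamber on the side of $G$ facing $C_0$, not its partner running the other way. This is part of Salvetti's boundary formula.

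What your route buys is a finite homotopy model, which is exactly what the rest of the paper uses. What it costs is that Step 1 is a strictly stronger theorem than the $\pi_1$ computation you need. Your fallback (van Kampen over neighbourhoods of faces with finite stabilisers) is close in spirit to the cited approach.

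\paragraph{Two points to fix in Step 1.} First, you must work over the interior $\mathring{I}$ of the Tits cone, as Van der Lek and Salvetti do; the formula for $Y_W$ should be read this way. Over the closed cone, your claim that $W$ acts freely is false. Take type $\tilde A_1$ (\Cref{fig:tits-cone}) and a vector $y \neq 0$ on the dashed line. Then $0+iy$ lies in no $H_r\otimes\C$, because every $H_r$ is a non-horizontal line through the origin. Yet $0+iy$ is fixed by the infinite cyclic subgroup $\langle ab\rangle$, which acts trivially on that line. Over $\mathring{I}$ there is no boundary to handle. A point of $I$ is interior if and only if its stabiliser is finite, and the arrangement is locally finite there. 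These are exactly the facts your indexing by faces with finite stabiliser and your open cover already presuppose.

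Second, the retraction cannot literally be \emph{push $x$ to the barycentre of its face and $y$ into $C$}. The face containing $x$ jumps when $x$ reaches a wall, so this map is discontinuous. Closing Step 1 needs Salvetti's actual argument: a realisation of the complex of pairs $(F,C)$ inside the complement together with a genuinely continuous retraction, or a nerve-lemma argument. Since the paper quotes that theorem, citing it is the honest way to finish.
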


Fox and Neuwirth proved that $Y_{\S_n}$ is a classifying space for the braid group $\Br_n$ \cite{fox1962braid}.
This led to a formulation of the following ``$K(\pi, 1)$ conjecture,'' which is one of the most important open problems about Artin groups.

\begin{conjecture}
    For every Coxeter group $W$, we have $Y_W \simeq K(G_W, 1)$.
\end{conjecture}

The $K(\pi, 1)$ conjecture is known for instance in the following cases: spherical \cite{deligne1972immeubles}, affine \cite{paolini2021proof}, two-dimensional and FC-type \cite{charney1995k}. See also \cite{brieskorn1973groupes, okonek1979dask, callegaro2010k, hendriks1985hyperplane, delucchi2022dual, haettel2023new, huang2023labeled, huang2024cycles}.

The orbit configuration space $Y_W$ has the homotopy type of a CW complex with a finite number of cells.
This was proved by Salvetti \cite{salvetti1987topology, salvetti1994homotopy}, and the complex he constructed is known as the Salvetti complex.

\begin{theorem}[\cite{salvetti1994homotopy}]
    The orbit configuration space $Y_W$ is homotopy equivalent to a CW complex $X_W$, called the Salvetti complex, having one cell $\sigma_T$ of dimension $|T|$ for every subset $T \subseteq S$ such that the standard parabolic subgroup $W_T \subseteq W$ is spherical (i.e., finite).
\end{theorem}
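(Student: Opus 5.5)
The plan is to build a $W$-equivariant regular CW complex $\widetilde X_W$ inside $(I + iV) \setminus \bigcup_r H_r \otimes \C$ onto which this space deformation retracts $W$-equivariantly, and then take $X_W = \widetilde X_W / W$. Because the $W$-action is a covering space action, a $W$-equivariant homotopy equivalence descends to a homotopy equivalence $X_W \simeq Y_W$. So the work divides into two parts: constructing $\widetilde X_W$ together with its retraction, and counting the cells of the quotient.

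\textbf{Step 1 (the complex).} Stratify the Tits cone $I$ by the faces of the arrangement $\A$. Each face $F$ of a closed chamber $\rho(u)(\overline{C_0})$ is a $W$-translate of a face of $\overline{C_0}$. The faces of $\overline{C_0}$ contained in $I$ are indexed by the subsets $T\subseteq S$ such that $W_T$ is finite, with $F_T = \overline{C_0}\cap\bigcap_{s\in T}H_s$. The reason is that points on such a face have finite stabilizer $W_T$, and faces with infinite stabilizer only occur on the boundary of $I$. Following Salvetti, the cells of $\widetilde X_W$ are pairs $[C \prec F]$, where $F$ is a face contained in $I$ and $C$ is a chamber whose closure contains $F$. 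Choose a point $x_F\in F$ and a point $x_C\in C$ for every face and chamber. The cell $[C\prec F]$ is the realization of the dual cell of $F$, namely the convex hull of the points $x_G$ over all faces $G\geq F$ of the local arrangement at $F$, which is a zonotope-like polytope of dimension $\operatorname{codim}F$. This realization is placed in $V\otimes\C$ by sending a vertex $[D\prec D]$ to the point $x_D + i(x_{C'} - x_D)$, where $C'$ is the chamber obtained by projecting $C$ onto the face $D$ (the chamber nearest to $C$ among those adjacent to $D$), and then extending by convexity. The order of attaching and the boundary relations are the standard Salvetti rules. Since everything is defined by chamber and face data, $W$ permutes the cells.

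\textbf{Step 2 (the retraction).} The complement is covered by the open sets $U_F = \{x+iy : x\in \operatorname{star}(F),\ y \text{ not in any } H_r\supseteq F\}$. The intersections of these sets are controlled by the local arrangements at the faces, and locally each $U_F$ deformation retracts onto the union of the cells $[C\prec G]$ with $G\ge F$. Gluing these retractions by a nerve or Quillen fiber lemma argument, using the poset of faces, gives the $W$-equivariant homotopy equivalence. I expect this step to be the main obstacle, especially because in the non-spherical case $I$ is not open, so one must check that only faces in the interior of $I$ are needed and that the retraction stays inside $I+iV$. The convexity of $I$ from \Cref{thm:contragredient-representation}(iii) should handle the second issue.

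\textbf{Step 3 (counting).} Using \Cref{thm:contragredient-representation}(iv), $W$ acts freely on the cells, since it acts simply transitively on chambers. Every orbit of cells contains a unique cell of the form $[C_0\prec F]$ with $F$ a face of $\overline{C_0}$. Hence the cells of $X_W$ correspond bijectively to the $F_T$ with $W_T$ finite, and the cell $\sigma_T = [C_0\prec F_T]$ has dimension $\operatorname{codim}F_T=|T|$.
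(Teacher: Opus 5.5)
The paper gives no proof of this statement; it only cites Salvetti. So the comparison is with Salvetti's argument, and your outline follows it. Steps 1 and 3 are essentially the standard construction and cell count. Step 2, however, is the entire content of the theorem, and the mechanism you propose for it does not work.

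Identify $V\otimes_\R\C$ with $V\times V$. Your set $U_F$ is then the product of the convex set $\operatorname{star}(F)$ with the complement in $V$ of the real arrangement $\A_F=\{H_r\supseteq F\}$. So it is a disjoint union of convex pieces $\operatorname{star}(F)+iC_F$, one for each chamber $C$ with $F\subseteq\overline C$, where $C_F$ is the chamber of $\A_F$ containing $C$. In particular $U_F$ is homotopy equivalent to a discrete set.

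It therefore cannot deformation retract onto the union of the cells $[C\prec G]$ with $G\geq F$:
\begin{enumerate}
\item[(a)] for a codimension-one face $F$ that union is already a circle (two vertices, two edges);
\item[(b)] the union is not even contained in $U_F$, since your $0$-cells sit at real points $x_C$, and their imaginary part $0$ lies on every $H_r\supseteq F$.
\end{enumerate}
Suppose instead you meant the part of the complement $M$ lying over $\operatorname{star}(F)$. Then ``$U_F$ retracts onto the local cells'' is exactly Salvetti's theorem for the finite arrangement $\A_F$. When $W$ is spherical, the face $F=\{0\}$ has $\operatorname{star}(F)=V$ and $U_{\{0\}}=M$, so the argument is circular precisely in the case the paper needs. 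In addition, gluing local deformation retractions would require compatibility on overlaps, which a nerve or fiber lemma does not supply by itself.

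What is missing is a global reason why $M$ has the homotopy type of the complex you built. A natural one is the following. $M$ is the disjoint union of the convex cones $F+iC_F$, where $F$ is the face containing the real part and $C_F$ is the chamber of $\A_F$ containing the imaginary part. These cones have three key properties:
\begin{enumerate}
\item[(i)] they are indexed exactly by your cells $[C\prec F]$;
\item[(ii)] $F+iC_F$ has codimension $\operatorname{codim}F=\dim[C\prec F]$;
\item[(iii)] $G+iD_G\subseteq\overline{F+iC_F}$ if and only if $[C\prec F]$ is a face of $[D\prec G]$.
\end{enumerate}
So the Salvetti complex is the dual complex of this stratification of $M$, and you need a genuine dual-complex argument. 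For finite $W$, one route (that of Bj\"orner--Ziegler) runs as follows. The cones $F+iY$, with $Y$ any face of $\A_F$, cut the unit sphere of $V\otimes_\R\C$ into a regular CW complex in which the complexified hyperplanes form a subcomplex. The complement of a subcomplex then retracts onto the span of the barycenters of the remaining cells. Alternatively, you can carry out Salvetti's explicit retraction.

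Two smaller fixes:
\begin{enumerate}
\item[(1)] Every face of $\overline{C_0}$ lies in $I$, so the faces you want are those in the interior $\mathring I$. The complement should be taken over $\mathring I+iV$; over $I+iV$ the $W$-action need not be free, for example for $\tilde A_1$. This also removes your worry about the boundary of $I$.
\item[(2)] The realization must use the barycentric subdivision. Send $x_G$, for every face $G\geq F$, to $x_G+i(x_{C'}-x_G)$ with $C'$ the projection of $C$ onto $G$, and extend affinely over the simplices of flags $F\le G_0<\dots<G_k$. Extending by convexity from the vertices $[D\prec D]$, which your formula sends to real points, gives a real polytope, and that polytope crosses the hyperplanes.
\end{enumerate}
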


The cells $\sigma_T$ of the Salvetti complex can be described geometrically as follows.
Given a spherical standard parabolic subgroup $W_T$, consider its representation as a spherical real reflection group acting on $\R^n$. Fix any point $x$ in the fundamental chamber $C_0$, and denote by $Q \subseteq \R^n$ the polytope obtained as the convex hull of the points $u(x)$ for $u \in W_T$. The polytope $Q$ is dual to the hyperplane arrangement associated with $W_T$, as shown in \Cref{fig:salvetti-cell} on the left.
Identify the cell $\sigma_T$ with $Q$.

For every face $F \subset Q$, there exist a subset $R \subset T$ and a minimal coset representative $u_0\in W_T$ for the coset $u_0 W_R \subset W_T$ such that $F$ is the convex hull of the vertices $u_0 u (x)$ for $u \in W_R$.
Then $F$ can be canonically identified with the cell $\sigma_R$, as illustrated in \Cref{fig:salvetti-cell} on the right.
This construction describes how $\sigma_T$ is attached to the lower-dimensional cells $\sigma_R$ for $R \subset T$ in the Salvetti complex $X_W$.

Note that, for every standard parabolic subgroup $W_T$, the Salvetti complex $X_{W_T}$ is naturally a subcomplex of the Salvetti complex $X_W$.
Geometrically, this corresponds to an inclusion of $Y_{W_T}$ as a subspace of $Y_W$ in a neighborhood of a point fixed by $W_T$ (see e.g.\ \cite[Section 6]{lofano2021euclidean}).

\begin{figure}
    \begin{center}
        \begin{subfigure}[t]{.3\linewidth}
            \begin{tikzpicture}[scale=0.8]
        	\draw (-2.5,0) -- (2.5,0);
        	\draw[rotate=60] (-2.5,0) -- (2.5,0);
        	\draw[rotate=120] (-2.5,0) -- (2.5,0);
            \node at (0, -2.7) {$C_0$};
            \draw[fill=blue, blue, opacity=0.2] (0, -1.5) -- (-1.299038106, -0.75) -- (-1.299038106, 0.75) -- (0, 1.5) -- (1.299038106, 0.75) -- (1.299038106, -0.75) -- cycle;
            {\small
            \node at (0, -1.8) {$x$};
            \node at (-1.8, -0.9) {$a(x)$};
            \node at (1.8, -0.9) {$b(x)$};
            \node at (-1.9, 0.9) {$ab(x)$};
            \node at (1.9, 0.9) {$ba(x)$};
            \node at (0, 1.8) {$aba(x)$};
            }
        	\end{tikzpicture}
        \end{subfigure}\qquad\qquad
        \begin{subfigure}[t]{.3\linewidth}
            \centering
            \begin{tikzpicture}[scale=0.8]
        	\draw[black!20] (-2.5,0) -- (2.5,0);
        	\draw[rotate=60, black!20] (-2.5,0) -- (2.5,0);
        	\draw[rotate=120, black!20] (-2.5,0) -- (2.5,0);
            \node at (0, -2.7) {\phantom{$C_0$}};
            \draw[fill=blue, blue, opacity=0.2] (0, -1.5) -- (-1.299038106, -0.75) -- (-1.299038106, 0.75) -- (0, 1.5) -- (1.299038106, 0.75) -- (1.299038106, -0.75) -- cycle;

            \begin{scope}[every node/.style={fill=white, circle, draw=blue, inner sep=1}]
                \node (S) at (0, -1.5) {};
                \node (N) at (0, 1.5) {};
                \node (SW) at (-1.299038106, -0.75) {};
                \node (SE) at (1.299038106, -0.75) {};
                \node (NW) at (-1.299038106, 0.75) {};
                \node (NE) at (1.299038106, 0.75) {};
            \end{scope}

            \draw[blue, -Latex] (S) -- (SW);
            \draw[blue, -Latex] (S) -- (SE);

            \draw[blue, -Latex] (SW) -- (NW);
            \draw[blue, -Latex] (SE) -- (NE);
            
            \draw[blue, -Latex] (NW) -- (N);
            \draw[blue, -Latex] (NE) -- (N);

            \node at (0, -0.3) {$\sigma_{\{a, b \}}$};
            \node at (-0.8, -1.5) {$\sigma_{\{a\}}$};
            \node at (0.8, -1.5) {$\sigma_{\{b\}}$};
            \node at (-1.8, -0.3) {$\sigma_{\{b\}}$};
            \node at (1.8, -0.3) {$\sigma_{\{a\}}$};
            \node at (-0.8, 1.5) {$\sigma_{\{a\}}$};
            \node at (0.8, 1.5) {$\sigma_{\{b\}}$};

         \end{tikzpicture}
        \end{subfigure}
    \end{center}
    
    \caption{On the left, the polytope dual to the hyperplane arrangement associated with the symmetric group $\S_3 = \< a, b \mid a^2 = b^2 = 1, \; aba = bab \>$.
    On the right, the cell $\sigma_{T}$ for $T = \{a, b\}$ when $m(a, b) = 3$. The arrows point away from the minimal coset representative, and describe how the $1$-dimensional faces with the same label ($\sigma_{\{a\}}$ or $\sigma_{\{b\}}$) are identified. All six $0$-dimensional faces are identified with $\sigma_\varnothing$.}
    \label{fig:salvetti-cell}
\end{figure}

\section{Artin monoids}

While Artin groups are fairly complicated to understand, the closely related \emph{Artin monoids} are more approachable.
For every Coxeter system $(W, S)$, the associated Artin monoid $G_W^+$ is defined through a monoid presentation identical to the Artin group presentation \eqref{eq:artin-presentation}:
\begin{equation}
 	G_W^+ = \< S \mid \underbrace{stst\dotsm}_{m(s,t) \text{ terms}} \! = \! \underbrace{tsts\dotsm}_{m(s,t) \text{ terms}} \forall\, s\neq t 
    \text{ with } m(s,t) \neq \infty
    \>_+
    \nonumber
\end{equation}
(the ``$+$'' on the right-hand side denotes a monoid presentation).
There is a natural monoid map $G_W^+ \to G_W$ sending each generator of the Artin monoid $G_W^+$ to its copy inside the Artin group $G_W$.
It is not obvious, but true, that an Artin monoid injects into its Artin group.

\begin{theorem}[\cite{brieskorn1972artin, deligne1972immeubles, paris2002artin}]
    For every Coxeter group $W$, the natural map $G_W^+ \to G_W$ is injective.
\end{theorem}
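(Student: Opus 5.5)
The natural route is Paris's argument: build a faithful action of the monoid $G_W^+$ by permutations of some set, and check that each generator acts invertibly. The universal property of the group presentation then extends the action to $G_W$, and faithfulness on $G_W^+$ forces injectivity of $G_W^+ \to G_W$. Concretely, I want a set $\Omega$ and, for each $s \in S$, a map $\phi_s \colon \Omega \to \Omega$ with three properties:
\begin{enumerate}[(a)]
    \item each $\phi_s$ is a bijection;
    \item the maps $\phi_s$ satisfy the braid relations $\phi_s\phi_t\phi_s\dotsm = \phi_t\phi_s\phi_t\dotsm$ (with $m(s,t)$ factors on each side);
    \item there is a point $\omega_0 \in \Omega$ such that $g \mapsto g\cdot\omega_0$ is injective on $G_W^+$.
\end{enumerate}
By (a) and (b), the assignment $s\mapsto \phi_s$ defines a homomorphism $G_W \to \mathrm{Sym}(\Omega)$. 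Its composite with $G_W^+ \to G_W$ is the monoid action. So if two monoid elements have the same image in $G_W$, they act identically on $\omega_0$, and by (c) they are equal.

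For $\Omega$ I would take a linear-algebraic model, following Paris and generalizing the Krammer/Lawrence representations. Let $\Phi^+$ be the set of positive roots of the contragredient representation of \Cref{thm:contragredient-representation}. Define $E$ to be the set of pairs $(\alpha, g)$ with $\alpha \in \Phi^+$ and $g \in G_W^+$, and let $\Omega$ be the set of finite subsets of $E$ satisfying a closure condition. The generator $s$ acts on an element $(\alpha, g)$ by $\alpha \mapsto s(\alpha)$ when $\alpha \neq \alpha_s$, with a twisted rule on the simple root $\alpha_s$, and on $g$ by $g \mapsto sg$. The invertibility in (a) and the braid relations in (b) should reduce to rank-two computations inside the dihedral parabolic subgroups $W_{\{s,t\}}$. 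By \Cref{thm:parabolic-subgroups}(i) these are themselves Coxeter groups, finite when $m(s,t)<\infty$, and their roots can be listed explicitly. For (c), I would recover $g$ from $g\cdot\omega_0$ by induction on word length. The key is to detect a left divisor $s$ of $g$ from the orbit point, using the fact that $s$ is a left divisor of $g$ exactly when $\alpha_s$ lies in a certain "inversion set" attached to $g$.

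A more classical alternative, which I would pursue if the one above gets unwieldy, is Brieskorn--Saito/Garside theory. Its first step is to prove left and right cancellativity of $G_W^+$ via the reversing/word-reduction lemma of Brieskorn--Saito. Its second step is to show that any two elements with a common right multiple have a least common right multiple. In the spherical case, the lift $\Delta$ of the longest element $\delta$ from \Cref{thm:longest-element} is a common multiple of everything and conjugates $S$ to $S$. Then Ore's theorem embeds $G_W^+$ into its group of fractions, and that group has the same presentation, so it is $G_W$. In the general, non-spherical case, Ore's condition fails, and one needs Paris's action argument above. This is why I would prefer the action method as the uniform approach.

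The main obstacle is (c), together with setting up $\Omega$ so that the rank-two verification in (b) closes. If the closure conditions defining $\Omega$ are too weak, the braid relations hold only up to elements of $\Omega$ that were never defined. If they are too strong, injectivity on the orbit is lost. I would first fix the action by requiring it to be compatible with the reduced-word property $l(u_0u)=l(u_0)+l(u)$ of \Cref{thm:parabolic-subgroups}(ii) in each dihedral parabolic subgroup, and then verify (c) by the divisor-detection induction.
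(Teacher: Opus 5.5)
The paper gives no proof of this statement: it quotes it from Brieskorn--Saito and Deligne for the spherical case, and from Paris for arbitrary $W$.

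Your reduction is correct. Bijections $\phi_s$ satisfying the braid relations give a homomorphism $G_W\to\mathrm{Sym}(\Omega)$, and an orbit map injective on $G_W^+$ then forces $G_W^+\to G_W$ to be injective. Your induction for (c) is also sound, provided you have a function on $\Omega$ that reads off $\{s\in S\mid s\lleq g\}$ from $g\cdot\omega_0$.

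However, everything that makes the theorem hard is left unspecified. You never write down the closure condition defining $\Omega$, the twisted rule, or the divisor-detecting function. You only assert that (a), (b), (c) reduce to rank-two computations or follow by induction. As it stands this is an outline of a strategy, not a proof.

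Moreover, the one concrete feature you do fix is incompatible with (a). Suppose $s$ changes the second coordinate of every pair by $g\mapsto sg$ and acts on subsets elementwise, as your description suggests. Then all second coordinates occurring in an element of $\phi_s^k(\Omega)$ lie in $s^kG_W^+$. Surjectivity of $\phi_s$ would force every $\omega\in\Omega$ to have its second coordinates in $\bigcap_k s^kG_W^+$. This intersection is empty because $l(s^kh)=k+l(h)$, so $\Omega\subseteq\{\varnothing\}$ and (c) fails.

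The underlying issue is that $G_W^+$ has no inverses. In Krammer's representation, which you take as a model, the coefficients lie in $\Z[q^{\pm1},t^{\pm1}]$, where $q$ and $t$ are units, so invertibility comes for free. With monoid-valued data you need a genuinely new mechanism to undo $s$.

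Divisor detection has a similar gap. The fact you invoke, $s\lleq\iota(u)$ iff $l(su)<l(u)$ iff $u^{-1}(\alpha_s)$ is a negative root, only concerns lifts of elements of $W$. For an arbitrary $g\in G_W^+$ you need two things: a monoid-level characterization of its left divisors, and a proof that this data can be recovered from $g\cdot\omega_0$. That is precisely the faithfulness argument, and it is missing.

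Your Garside/Ore alternative is correct, but only for spherical $W$. That suffices for the paper's $K(\pi,1)$ corollary, but not for the statement as given.
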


For example, the element $aba = bab \in \Br_3$ also belongs to the Artin monoid $\Br_3^+$, whereas the element $a b^{-1}$ does not, as it cannot be written as a product of the generators (without using their inverses).

Since the defining relations are homogeneous, every word (over the alphabet $S$) representing a given monoid element $\alpha \in G_W^+$ has the same length, and we denote it by $l(\alpha)$.
In addition, every element $u$ of the Coxeter group $W$ can be lifted to an Artin monoid element by taking any expression of minimal length $u = s_{i_1} s_{i_2} \cdots s_{i_k}$ and reading it as a word in the Artin monoid $G_W^+$.
Call $\iota \colon W \to G_W^+$ the resulting function, which is a section of the natural projection $\pi \colon G_W^+ \to W$.

\begin{theorem}[\cite{tits1969probleme}]
    The section $\iota \colon W \to G_W^+$ is well-defined, i.e., it does not depend on the chosen minimal-length expression.
\end{theorem}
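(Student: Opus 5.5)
The statement is Tits' solution to the word problem: any two reduced expressions of the same element $u \in W$ are connected by a sequence of braid moves. A braid move replaces a factor $stst\dotsm$ ($m(s,t)$ terms) by $tsts\dotsm$ ($m(s,t)$ terms). Since braid moves are exactly the defining relations of $G_W^+$, this gives well-definedness of $\iota$ at once. I would prove this by induction on $l(u)$, using the exchange/deletion property of Coxeter groups. That property follows from the geometric picture of \Cref{thm:contragredient-representation}: $l(u)$ equals the number of hyperplanes $H_r$ separating $C_0$ from $u(C_0)$. Moreover, $l(us) < l(u)$ if and only if $H_s$ separates $C_0$ from $u^{-1}$-translates appropriately, i.e. iff $u(C_0)$ and $C_0$ lie on opposite sides of $u(H_s)$.

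\textbf{Setup.} Let $a = s_1 s_2 \cdots s_k$ and $b = t_1 t_2 \cdots t_k$ be two reduced expressions for $u$. If $s_k = t_k$, then $s_1\cdots s_{k-1}$ and $t_1 \cdots t_{k-1}$ are reduced expressions of $us_k$. By induction they are equal in $G_W^+$, so $a = b$ in $G_W^+$. Hence assume $s = s_k \neq t = t_k$.

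\textbf{Key step.} Both $l(us) < l(u)$ and $l(ut) < l(u)$ hold. I would show that this forces $m = m(s,t) < \infty$ and that $u$ has a reduced expression ending in the alternating word $\Delta_{s,t} = \dotsm tst$ of length $m$, i.e. $u = v\,\Delta_{s,t}$ with $l(u) = l(v) + m$.

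To prove this I would apply \Cref{thm:parabolic-subgroups}(ii) with $T = \{s,t\}$. Write $u = u_0 w$ with $u_0$ the minimal coset representative of $uW_T$ and $l(u) = l(u_0) + l(w)$. Then $l(ws) < l(w)$ and $l(wt) < l(w)$ inside $W_T$. In a dihedral group this happens only if $W_T$ is finite and $w$ is its longest element (\Cref{thm:longest-element}), which has exactly two reduced expressions: $\dotsm sts$ and $\dotsm tst$. This is the main obstacle. It requires checking that right descents of $u$ in $T$ coincide with right descents of $w$, which follows from the length additivity $l(u_0 w') = l(u_0) + l(w')$ for all $w' \in W_T$. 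It also requires the elementary dihedral computation.

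\textbf{Conclusion.} Now $u$ has reduced expressions $c = v' \cdot(\dotsm ts)$ and $c' = v' \cdot(\dotsm st)$, where $v'$ is a reduced expression of $u_0$. These are equal in $G_W^+$ by a single braid move. Since $a$ and $c$ both end in $s$, the first case gives $a = c$ in $G_W^+$. Similarly $b = c'$, since both end in $t$. Therefore $a = c = c' = b$ in $G_W^+$, completing the induction.
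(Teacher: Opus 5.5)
The paper does not prove this statement. It quotes it from Tits and only remarks that it is the Matsumoto property (reduced words are connected by braid moves), related to the exchange property.

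Your argument is the standard textbook proof of Matsumoto's theorem, and it is correct. It uses strong induction on $l(u)$, and treats distinct last letters $s\neq t$ via the parabolic decomposition $u=u_0w$ with $w\in W_{\{s,t\}}$. So it supplies exactly what the paper's remark alludes to. The inductive bookkeeping is right: $a$ and $c$ (resp. $b$ and $c'$) share their last letter, so the induction hypothesis at length $l(u)-1$ applies, and $c$, $c'$ differ by a single defining relation of $G_W^+$.

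One ingredient is used implicitly and should be stated. \Cref{thm:parabolic-subgroups}(ii) concerns the length $l_S$ in $W$. Your dihedral computation is a statement about the length $l_{\{s,t\}}$ in the Coxeter system $(W_{\{s,t\}},\{s,t\})$: only the longest element has both $s$ and $t$ as right descents, and it exists only when $m(s,t)<\infty$. You need $l_S(x)=l_{\{s,t\}}(x)$ for all $x\in W_{\{s,t\}}$ in two places:
\begin{enumerate}[(a)]
\item to transfer the two descents of $u$ to $w$ and run the dihedral argument;
\item to conclude that $v'$ followed by an alternating word of length $m(s,t)$ is a reduced expression of $u$.
\end{enumerate}
This fact is standard and often built into the statement that $(W_T,T)$ is a Coxeter system. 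It follows in one line from the deletion condition you cite: repeatedly deleting pairs of letters from an $\{s,t\}$-word for $x$ yields an $S$-reduced word that still uses only $s$ and $t$. With this added, the proof is complete.

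Your opening remarks about counting separating hyperplanes, and about descents via the wall $u(H_s)$, are never used. The argument runs entirely through \Cref{thm:parabolic-subgroups}, so they can be dropped.
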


The section $\iota$ preserves the length. However, it is not a monoid homomorphism.
The previous theorem essentially states that any two minimal-length expressions of an element $u \in W$ can be transformed into each other through a sequence of substitutions $stst\cdots \leftrightarrow tsts \cdots$ (where both subwords have $m(s,t)$ factors).
This is known as the \textit{Matsumoto property} and it is closely related to the \textit{exchange property}.

In the Artin monoid, we have divisibility relations on the left and the right: we say that $\alpha$ left-divides $\beta$ (resp.\ $\alpha$ right-divides $\beta$) if $\beta = \alpha\gamma$ (resp.\ $\beta = \gamma\alpha$) for some $\gamma \in G_W^+$; when this happens, we also write $\alpha \lleq \beta$ (resp.\ $\alpha \rleq \beta$).

\begin{theorem}[\cite{brieskorn1972artin}]
    Let $W$ be any Coxeter group.
    \begin{enumerate}[(i)]
        \item Any nonempty subset $E \subseteq G_W^+$ has a (left or right) greatest common divisor.
        \item A subset $T\subseteq S$ admits a (left or right) least common multiple in $G_W^+$ if and only if the standard parabolic subgroup $W_T \subseteq W$ is spherical.
        When this happens, the left and right least common multiples coincide and are denoted by $\Delta_T$ (the \textit{Garside element} of the Artin group $G_{W_T}$).
    \end{enumerate}
    \label{thm:gcd-lcm}
\end{theorem}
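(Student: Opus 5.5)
The plan follows the classical Brieskorn--Saito (Garside) approach. Everything rests on combinatorial properties of the Artin monoid, and the first is \emph{cancellativity}: if $\alpha\beta=\alpha\gamma$ or $\beta\alpha=\gamma\alpha$, then $\beta=\gamma$. I would also use the \emph{reduction lemma}: if $s\alpha = t\beta$ with $s\neq t$ in $S$, then $m(s,t)<\infty$ and there is $\gamma$ with
\[ \alpha=\underbrace{tst\dotsm}_{m(s,t)-1}\gamma \quad\text{and}\quad \beta=\underbrace{sts\dotsm}_{m(s,t)-1}\gamma. \]
I would prove both together by induction on the common length $l(s\alpha)$. Let $w_1,w_2$ be words representing the equal elements. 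They are connected by a finite chain of elementary substitutions $stst\dotsm\leftrightarrow tsts\dotsm$. One then analyses how the first letter changes along the chain, applying the inductive hypothesis to shorter words. This is the technical heart of the argument and the step I expect to be the main obstacle. The inductive bookkeeping, especially in Garside's and Brieskorn--Saito's ``chain'' argument, is delicate, and I would follow their proof closely.

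\textbf{(i) Existence of gcds.} Fix $\alpha\in E$. The set of left divisors of $\alpha$ is finite, because there are only finitely many words of length at most $l(\alpha)$. The common left divisors of $E$ therefore form a finite nonempty set, containing $1$. It suffices to show that any two elements $\beta,\gamma$ which are both left divisors of some $\alpha$ have a left lcm, and that this lcm again divides $\alpha$. Iterating this over the finite set of common divisors produces a common divisor that all others divide, which is the gcd.

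To prove the claim about $\beta,\gamma$, I would first show the following: \emph{if $s,t\in S$ both left-divide $\alpha$, then so does the alternating word $\Delta_{\{s,t\}}$ of length $m(s,t)$}, and this element is the lcm of $s$ and $t$. This is immediate from the reduction lemma. I would then extend it to arbitrary $\beta,\gamma$ by a double induction on $l(\beta)+l(\gamma)$, using cancellativity to peel off common prefixes. The right-handed version follows by symmetry, via the anti-automorphism reversing words, which preserves the defining relations.

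\textbf{(ii) Lcms of subsets.} By (i), applied to the set of common multiples, a least common multiple exists as soon as any common multiple exists. So it remains to decide when $T$ has a common multiple.

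Suppose first that $W_T$ is spherical. Let $\delta_T$ be its longest element (\Cref{thm:longest-element} together with \Cref{thm:parabolic-subgroups}(i)). For $s\in T$ we have $l(s\delta_T)<l(\delta_T)$, so $\delta_T$ has a reduced expression beginning with $s$. By Tits' theorem, $\iota(\delta_T)$ is left-divisible by every $s\in T$, and similarly right-divisible. Thus $\iota(\delta_T)$ is a common multiple, and a lcm $\Delta_T$ exists.

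I would then show $\Delta_T=\iota(\delta_T)$. The left lcm divides $\iota(\delta_T)$, and every element dividing some $\iota(u)$ is of the form $\iota(u')$ with $l(u')+l(u'^{-1}u)=l(u)$. On the other hand, every element of $W_T$ left-divides $\delta_T$ in this sense. An element of $W_T$ that is left-divisible by all of $T$ has no $s\in T$ with $l(su)>l(u)$, and hence must be $\delta_T$. The same argument on the right shows that the left and right lcms coincide.

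Conversely, suppose $T$ has a common multiple $\mu$, and take $\mu$ of minimal length among common multiples lying in $G^+_{W_T}$ (the lcm lies there, since the relations preserve letters of $T$). I would show by induction that every $\iota(u)$ with $u\in W_T$ left-divides $\mu$. If $\iota(u)\lleq\mu$ and $l(us)>l(u)$, then $\iota(us)=\iota(u)s$. Writing $\mu=\iota(u)\nu$, I need $s\lleq\nu$. This follows by considering $\iota(u)$ and $s$-multiples and applying the reduction lemma together with the lcm closure established in (i). Since $\mu$ has only finitely many divisors, $W_T$ must then be finite.
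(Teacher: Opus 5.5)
The paper does not prove this statement; it is quoted from Brieskorn--Saito, so your outline has to stand on its own. Much of it does. Cancellativity and the reduction lemma, lcms of pairs that have a common multiple, the gcd of $E$ as the lcm of its finite set of common divisors, the lcm of $T$ as the gcd of its common multiples, and $\Delta_T=\iota(\delta_T)$ in the spherical case (which also shows that left and right lcms agree) are the classical route and are correct.

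\textbf{The gap: the inductive step in the ``only if'' direction of (ii).} Your induction records only $\iota(u)\lleq\mu$. From $\iota(u)\lleq\mu$, $s\lleq\mu$ and $l(us)>l(u)$ you cannot get $\iota(u)s\lleq\mu$ by lcm closure. For example, take $m(s,t)=3$ and $u=st$. The lcm of $\iota(u)$ and $s$ is $st$ itself. The element $st$ is left-divisible by $\iota(u)$ and by $s$, but not by $\iota(us)=sts$. The fact that \emph{every} element of $T$ divides $\mu$ must be carried through the induction, and your sketch does not say how.

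\textbf{One way to close it.} Prove by induction on $l(u)$ that for every $u\in W_T$ we have $\iota(u)\lleq\mu$, and $\iota(u)r\lleq\mu$ for every $r\in T$ with $l(ur)>l(u)$. The case $u=1$ is just the hypothesis on $\mu$.
For $u\neq 1$:
\begin{enumerate}[(a)]
\item Choose a right descent $s'$ of $u$, so $s'\neq r$. Write $u=u_0y$ with $u_0$ the minimal representative of $uW_{\{r,s'\}}$; then $y\neq 1$ and $l(u_0)<l(u)$.
\item By induction, $\mu=\iota(u_0)\nu_0$ with $r,s'\lleq\nu_0$. The reduction lemma then gives $m(r,s')<\infty$ and $\Delta_{\{r,s'\}}\lleq\nu_0$.
\item Since $yr$ lies in the finite dihedral group $W_{\{r,s'\}}$, we have $\iota(yr)\lleq\Delta_{\{r,s'\}}$. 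As all lengths add, $\iota(u)r=\iota(u_0)\iota(yr)\lleq\iota(u_0)\Delta_{\{r,s'\}}\lleq\mu$.
\end{enumerate}
This works for any common multiple $\mu\in G_W^+$, so you can drop the detour through $G^+_{W_T}$. Your justification for that detour was insufficient anyway: preservation of letters only shows that $G^+_{W_T}$ is closed under taking divisors. It does not show that an lcm formed in $G_W^+$ avoids letters outside $T$.

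\textbf{A smaller repair in (i).} The quantity $l(\beta)+l(\gamma)$ need not decrease. Take $\beta=s\beta'$ and $\gamma=t$ with $s\neq t$. The problem becomes the lcm of $\beta'$ and the alternating word $tst\dotsm$ of length $m(s,t)-1$, and this pair is not smaller once $m(s,t)\geq 3$.
Induct instead on the length of the given common multiple $\alpha$:
\begin{enumerate}[(a)]
\item Compute $\lambda=\mathrm{lcm}(s\beta',t)=s\cdot\mathrm{lcm}(\beta',tst\dotsm)$.
\item Then $\mathrm{lcm}(s\beta',t\gamma')=t\cdot\mathrm{lcm}(\lambda',\gamma')$, where $\lambda=t\lambda'$.
\end{enumerate}
Both inner pairs have a common multiple of length $l(\alpha)-1$, so the induction goes through.
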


The Garside element of an Artin group of spherical type enjoys many useful properties, some of which are presented in the following theorem.

\begin{theorem}[\cite{brieskorn1972artin}]
    Let $W$ be a spherical Coxeter group and $\Delta \in G_W^+$ the Garside element.
    \begin{enumerate}[(i)]
        \item $\Delta = \iota(\delta)$, where $\delta$ is the longest element of the Coxeter group $W$.
        \item Every element $\alpha \in G_W$ can be written in the form $\alpha = \Delta^k \beta$ for some $\beta \in G_W^+$ and $k \in \Z$.
        This expression is unique if we require $k$ to be as large as possible.
    \end{enumerate}
\end{theorem}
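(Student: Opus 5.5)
Part (i) and part (ii) will be handled separately, with (i) feeding into (ii).

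For (i), I will show that $\iota(\delta)$ is a common left multiple of all $s \in S$, that is, $s \lleq \iota(\delta)$ for every $s\in S$, and similarly on the right. Since $\Delta = \Delta_S$ is the least common multiple from \Cref{thm:gcd-lcm}(ii), this gives $\Delta \lleq \iota(\delta)$. To see that $s \lleq \iota(\delta)$, apply \Cref{thm:longest-element} with $u = s$: we get $l(s\delta) = l(\delta) - 1$. Hence $\delta$ has a reduced expression beginning with $s$, and by the well-definedness of $\iota$ (Tits' theorem) we have $\iota(\delta) = s\,\iota(s\delta)$. The right-hand statement is symmetric, using $\delta^{-1}=\delta$. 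The opposite divisibility needs a length comparison. Write $\iota(\delta) = \Delta\gamma$. I will show that $\pi(\Delta)$ has length $l(\Delta)$, i.e. that the word representing $\Delta$ is reduced in $W$. Then $\pi(\Delta)$ is an element of $W$ with $l(\pi(\Delta)) = l(\Delta)$, and $l(\pi(\Delta)) \le l(\delta) = l(\iota(\delta)) = l(\Delta)+l(\gamma)$, with the middle equality because $\delta$ has maximal length.

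The key lemma is therefore that every left divisor of $\iota(\delta)$ is of the form $\iota(u)$. I would prove it by induction on length, using the fact that a left divisor $\alpha$ of $\iota(u)$ satisfies $\iota(u) = \alpha \beta$ with $l(\alpha)+l(\beta) = l(u)$. Any word for $\alpha$ followed by a word for $\beta$ is then an expression for $u$ of length $l(u)$, so it is reduced in $W$. Consequently $\alpha = \iota(\pi(\alpha))$, and in particular $\pi(\alpha)$ has length $l(\alpha)$. Applying this to $\alpha = \Delta$ gives $l(\pi(\Delta)) = l(\Delta)$. Since $\delta$ is the unique element of maximal length, $l(\Delta) \ge l(\delta)$ would force $\gamma = 1$. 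It remains to get $l(\Delta)\ge l(\delta)$. For this I would use that $\pi(\Delta)$ is a reduced element $w$ such that every $s$ is a left descent, i.e. $l(sw)<l(w)$. In a finite Coxeter group, only $\delta$ has every simple reflection as a left descent: such a $w$ has $w(C_0)$ separated from $C_0$ by all walls, so $w(C_0) = -C_0$. Hence $\pi(\Delta)=\delta$ and $\Delta=\iota(\delta)$.

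For (ii), take a word for $\alpha$ in $S^{\pm1}$. Each $s^{-1}$ can be rewritten as $\Delta^{-1}\,\iota(s\delta)$, using (i) in the form $\Delta = s\,\iota(s\delta)$. Then I move every $\Delta^{-1}$ to the left. This uses that conjugation by $\Delta$ permutes $S$: since $\Delta$ is the lcm on both sides, $s\Delta = \Delta s'$ with $s' = \delta s \delta$, so $\Delta$ normalizes $G_W^+$. The result is $\alpha = \Delta^{-m}\beta$ with $\beta\in G_W^+$. For uniqueness with $k$ maximal, $k$ is bounded above because lengths are additive and the injectivity theorem applies: if $\alpha = \Delta^k\beta$, then $\Delta^{k-k_0}$ relates positive elements, so the abelianized length gives $k\,l(\Delta) \le$ const. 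Given the maximal $k$, $\beta = \Delta^{-k}\alpha$ is determined.

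The main obstacle is the descent characterization, namely that $w$ has all simple reflections as left descents only if $w = \delta$. I would argue it through the chamber geometry sketched in \Cref{thm:longest-element}.
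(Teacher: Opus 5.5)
The paper gives no proof of this statement; it is quoted from \cite{brieskorn1972artin}. Your argument is therefore best judged as a derivation from the paper's other black boxes (\Cref{thm:gcd-lcm}, \Cref{thm:longest-element}, Tits' theorem, and injectivity of $G_W^+\to G_W$). On those terms it is essentially correct, apart from one slip in (ii).

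\textbf{Part (i)} is sound.
\begin{itemize}
\item The lcm property gives $\Delta \lleq \iota(\delta)$.
\item Your prefix observation gives $\Delta=\iota(w)$ with $w=\pi(\Delta)$. No induction is needed: a factorization $\iota(u)=\alpha\beta$ concatenates directly to a reduced word for $u$.
\item Applying the same observation to $\Delta=s\Delta'$ shows that every $s$ is a left descent of $w$. Hence $w=\delta$, and $\Delta=\iota(\delta)$ follows at once. The bookkeeping with $\gamma$ is therefore superfluous.
\end{itemize}
The descent characterization also follows purely algebraically from \Cref{thm:longest-element}. If $w\neq\delta$, then $\delta=(\delta w^{-1})\,w$ with $l(\delta w^{-1})+l(w)=l(\delta)$. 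The last letter $s$ of a reduced word for $\delta w^{-1}\neq 1$ then satisfies $l(sw)=l(w)+1$.

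\textbf{Part (ii)} contains a slip. From $\Delta = s\,\iota(s\delta)$ you get $s^{-1}=\iota(s\delta)\,\Delta^{-1}$, not $\Delta^{-1}\iota(s\delta)$. In fact $\Delta^{-1}\iota(s\delta)=(\delta s\delta)^{-1}$. For example, in $\Br_3$ with $\Delta=aba=bab$, one gets $\Delta^{-1}\iota(a\delta)=\Delta^{-1}ba=b^{-1}\neq a^{-1}$. There are two easy fixes:
\begin{itemize}
\item use the right descent $\Delta=\iota(\delta s)\,s$ and write $s^{-1}=\Delta^{-1}\iota(\delta s)$; or
\item keep $\iota(s\delta)\,\Delta^{-1}$ and push $\Delta^{-1}$ leftward.
\end{itemize}

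The normalization you use should be justified concretely, not by ``$\Delta$ is the lcm on both sides''. \Cref{thm:longest-element} with $u=s\delta$ gives $l(\delta s\delta)=1$, so $s'=\delta s\delta\in S$. Then $\Delta=s\,\iota(s\delta)=\iota(\delta s')\,s'$ with $\iota(s\delta)=\iota(\delta s')$, which yields $s\Delta=\Delta s'$.

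For maximality of $k$, the degree map $G_W\to\Z$, $s\mapsto 1$, gives $k\,l(\Delta)\le\deg\alpha$. Admissible $k$ are closed downward, since $\Delta^k\beta=\Delta^{k-1}(\Delta\beta)$. So a largest $k$ exists, and then $\beta=\Delta^{-k}\alpha$ is forced.

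With these repairs the proof is complete. Be aware that the real depth lies in the lcm-existence and injectivity theorems you are importing, not in this derivation.
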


\section{The $K(\pi, 1)$ conjecture for Artin groups of spherical type}

In this section, we outline a proof of the $K(\pi, 1)$ conjecture for Artin groups of spherical type.
To do this, we need to show that the universal cover $\tilde X_W$ of the Salvetti complex $X_W$ is contractible when $W$ is a spherical Coxeter group.

The universal cover $\tilde X_W$ inherits a cellular structure from $X_W$; the cells of $\tilde X_W$ are indexed by pairs $(\alpha, T)$ for $\alpha \in G_W = \pi_1(X_W)$ and $T \subseteq S$ with $W_T$ finite.
The boundary of a cell $\sigma_{(\alpha, T)}$ of $\tilde X_W$ consists of all the cells $\sigma_{(\alpha \, \iota(u_0), R)}$ for all cosets $u_0W_R \subseteq W_T$ (here $u_0$ is the minimal representative of the coset $u_0 W_R$).
These cells are all distinct, and $\tilde X_W$ is a regular CW complex.
See \Cref{fig:salvetti-cover}.

\begin{figure}
    \begin{center}
        \begin{subfigure}[t]{.4\linewidth}
            \begin{tikzpicture}[scale=1]
            \draw[black!20] (-1.8,0) -- (1.8,0);
            \draw[black!20] (0, -1.8) -- (0, 1.8);

            \draw[fill=blue, blue, opacity=0.12] (-1, -1) rectangle (1, 1);

            \begin{scope}[every node/.style={fill=white, circle, draw=blue, inner sep=1}]
                \node (SW) at (-1, -1) {};                
                \node (SE) at (1, -1) {};                
                \node (NW) at (-1, 1) {};                
                \node (NE) at (1, 1) {};                
            \end{scope}

            \draw[blue, -Latex] (SW) -- (SE);
            \draw[blue, -Latex] (SW) -- (NW);
            \draw[blue, -Latex] (NW) -- (NE);
            \draw[blue, -Latex] (SE) -- (NE);

            \node at (0, -0.3) {$\sigma_{\{a, b \}}$};
            \node at (0, -1.3) {$\sigma_{\{a\}}$};
            \node at (0, 1.3) {$\sigma_{\{a\}}$};
            \node at (-1.4, -0.3) {$\sigma_{\{b\}}$};
            \node at (1.4, -0.3) {$\sigma_{\{b\}}$};

            \node at (-1.25, -1.3) {$\sigma_{\varnothing}$};
            \node at (1.25, -1.3) {$\sigma_{\varnothing}$};
            \node at (-1.25, 1.3) {$\sigma_{\varnothing}$};
            \node at (1.25, 1.3) {$\sigma_{\varnothing}$};

            \node at (0, -2.5) {\phantom{.}};
            \end{tikzpicture}
        \end{subfigure}\quad
        \begin{subfigure}[t]{.5\linewidth}
            \begin{tikzpicture}[scale=1.6]
                \clip (-1.3, -1.3) rectangle (2.3, 2.3);
                \draw[fill=blue, blue, opacity=0.05] (-2, -2) rectangle (3, 3);
                \draw[fill=blue, blue, opacity=0.12] (0, -0) rectangle (1, 1);

                \foreach \x in {-2, ..., 3} {
                    \foreach \y in {-2, ..., 3} {
                        \pgfmathparse{
                            (\x==0 && \y==0) || 
                            (\x==1 && \y==0) || 
                            (\x==0 && \y==1) || 
                            (\x==1 && \y==1)
                        }
                        \ifnum\pgfmathresult=1
                            \node[fill=white, circle, draw=blue, inner sep=1] (X-\x-\y) at (\x, \y) {};
                        \else
                            \node[fill=white, circle, draw=blue, inner sep=1, opacity=0.2] (X-\x-\y) at (\x, \y) {};
                        \fi
                    }
                }

                \foreach \x in {-2,...,2} {
                    \foreach \y in {-2,...,3} {
                        \pgfmathparse{
                            !((\x==0 && \y==0 && \x+1==1 && \y==0) || 
                            (\x==1 && \y==0 && \x+1==0 && \y==0) ||
                            (\x==0 && \y==1 && \x+1==1 && \y==1) ||
                            (\x==1 && \y==1 && \x+1==0 && \y==1))
                        }
                        \ifnum\pgfmathresult=1
                            \draw[blue, -Latex, opacity=0.2] (X-\x-\y) -- (X-\the\numexpr\x+1\relax-\y);
                        \else
                            \draw[blue, -Latex] (X-\x-\y) -- (X-\the\numexpr\x+1\relax-\y);
                        \fi
                    }
                }

                \foreach \x in {-2,...,3} {
                    \foreach \y in {-2,...,2} {
                        \pgfmathparse{
                            !((\x==0 && \y==0 && \x==0 && \y+1==1) || 
                            (\x==1 && \y==0 && \x==1 && \y+1==1) ||
                            (\x==0 && \y==1 && \x==0 && \y+1==0) ||
                            (\x==1 && \y==1 && \x==1 && \y+1==0))
                        }
                        \ifnum\pgfmathresult=1
                            \draw[blue, -Latex, opacity=0.2] (X-\x-\y) -- (X-\x-\the\numexpr\y+1\relax);
                        \else
                            \draw[blue, -Latex] (X-\x-\y) -- (X-\x-\the\numexpr\y+1\relax);
                        \fi
                    }
                }

                {
                \footnotesize
                    \node at (-0.35, -0.15) {$\sigma_{(\alpha, \varnothing)}$};
                    \node at (-0.35, 1.15) {$\sigma_{(\alpha b, \varnothing)}$};
                    \node at (1.4, -0.15) {$\sigma_{(\alpha a, \varnothing)}$};
                    \node at (1.4, 1.15) {$\sigma_{(\alpha ab, \varnothing)}$};

                    \node at (0.5, -0.15) {$\sigma_{(\alpha, \{a\})}$};
                    \node at (-0.35, 0.45) {$\sigma_{(\alpha, \{b\})}$};
                    \node at (0.5, 1.15) {$\sigma_{(\alpha b, \{a\})}$};
                    \node at (1.45, 0.45) {$\sigma_{(\alpha a, \{b\})}$};

                    \node at (0.5, 0.45) {$\sigma_{(\alpha, \{a, b\})}$};
                }
            \end{tikzpicture}
        \end{subfigure}
    \end{center}
    
    \caption{On the left, the Salvetti complex $X_W$ for the Coxeter group $W = \Z_2 \times \Z_2 = \< a, b \mid a^2 = b^2 = 1, \, ab = ba \>$. In this case, $X_W$ is a torus $S^1 \times S^1$ with its minimal cell structure.
    On the right, the universal cover $\tilde X_W \cong \R^2$. For every $\alpha \in G_W = \Z \times \Z$, the universal cover has: one $2$-cell $\sigma_{(\alpha, \{a, b\})}$; two $1$-cells $\sigma_{(\alpha, \{a\})}$ and $\sigma_{(\alpha, \{b\})}$; and one $0$-cell $\sigma_{(\alpha, \varnothing)}$. A generic $2$-cell $\sigma_{(\alpha, \{a, b\})}$ is highlighted, together with all the cells in its boundary.}
    \label{fig:salvetti-cover}
\end{figure}

The universal cover $\tilde X_W$ has a subcomplex $\tilde X_W^+$ consisting of all cells $\sigma_{(\alpha, T)}$ such that $\alpha \in G_W^+$.
We call $\tilde X_W^+$ the positive subcomplex of $\tilde X_W$.

\begin{lemma}
    If $W$ is spherical and $\tilde X_W^+$ is contractible, then $\tilde X_W$ is also contractible.
    \label{lemma0}
\end{lemma}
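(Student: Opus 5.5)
The plan is to write $\tilde X_W$ as an increasing union of translates of the positive subcomplex, all contractible, and conclude via homotopy groups and compactness. Since $G_W$ acts on $\tilde X_W$ by deck transformations, for every $\gamma \in G_W$ the subcomplex $\gamma \tilde X_W^+$ consists of the cells $\sigma_{(\gamma\alpha, T)}$ with $\alpha \in G_W^+$. It is isomorphic to $\tilde X_W^+$, hence contractible by hypothesis. I will take $\gamma = \Delta^{-k}$ for $k \geq 0$ and set
\[ Z_k = \Delta^{-k} \tilde X_W^+ . \]

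\textbf{Step 1: the $Z_k$ are nested.} Since $\Delta \in G_W^+$, we have $\Delta^{k}G_W^+ \subseteq \Delta^{k-1}G_W^+$ for all $k$. Hence $\Delta^{-k}G_W^+ \supseteq \Delta^{-(k-1)}G_W^+$, and therefore $Z_{k-1} \subseteq Z_k$.

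\textbf{Step 2: the $Z_k$ cover $\tilde X_W$.} Let $\sigma_{(\alpha,T)}$ be any cell. By the theorem on the Garside element, part (ii), we can write $\alpha = \Delta^{m}\beta$ with $\beta \in G_W^+$ and $m \in \Z$. If $m \geq 0$ then $\alpha \in G_W^+$ already. If $m < 0$ then $\alpha \in \Delta^{-k}G_W^+$ with $k = -m$. Either way, $\sigma_{(\alpha,T)}$ lies in some $Z_k$.

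We also need each $Z_k$ to be a genuine subcomplex, i.e.\ closed under taking faces. The faces of $\sigma_{(\alpha,T)}$ are the cells $\sigma_{(\alpha\iota(u_0),R)}$ with $\iota(u_0) \in G_W^+$, so $\alpha \in G_W^+$ implies $\alpha\iota(u_0) \in G_W^+$. Translation by $\Delta^{-k}$ preserves this property.

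\textbf{Step 3: conclude contractibility.} Every compact subset of a CW complex meets only finitely many cells. So the image of any map $S^n \to \tilde X_W$ lies in a finite subcomplex, hence in some $Z_k$ by Steps 1 and 2. There it is null-homotopic, since $Z_k$ is contractible. Thus all homotopy groups of $\tilde X_W$ vanish, and Whitehead's theorem gives that the CW complex $\tilde X_W$ is contractible.

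The only step that needs real input is Step 2, the covering by translates. It relies on the normal form $\alpha = \Delta^k\beta$; this is exactly where sphericity is used, since only then does $\Delta$ exist. One subtle point to check is that the deck action on cell indices is left multiplication $\alpha \mapsto \gamma\alpha$. This matches the boundary formula, in which faces are obtained by right multiplication.
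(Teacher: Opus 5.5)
Your proof is correct and follows essentially the same route as the paper: you cover $\tilde X_W$ by the nested translates $\Delta^{k}\tilde X_W^+$ using the normal form $\alpha = \Delta^k\beta$, then use compactness of spheres and Whitehead's theorem. The only differences are cosmetic: you index the chain as an increasing union over $k \geq 0$, and you spell out the subcomplex property and the compactness argument, which the paper leaves implicit.
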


\begin{proof}
    Let $\Delta \in G_W^+$ be the Garside element of $G_W$.
    If $\sigma_{(\alpha, T)}$ is any cell of $\tilde X_W$, we can write $\alpha = \Delta^{k}\beta$ for some $k \in \Z$ and $\beta \in G_W^+$.
    Then $\sigma_{(\alpha, T)}$ belongs to the subcomplex $Z_k = \Delta^{k} \tilde X_W^+ \subseteq \tilde X_W$; here we are letting the elements $\Delta^k \in G_W$ act on $\tilde X_W$ as deck transformations.

    We have therefore written $\tilde X_W$ as the union of a chain of subcomplexes $\ldots \supset Z_{-1} \supset Z_0 \supset Z_1 \supset Z_2 \supset \ldots$, all homeomorphic to $\tilde X_W^+$ and thus contractible.
    The homotopy groups of $\tilde X_W$ must vanish because any homotopy class can be represented by a map $S^n \to Z_k$ for some $k$.
    By Whitehead's theorem, the entire universal cover $\tilde X_W$ is also contractible.
\end{proof}

So it is enough to prove that the positive subcomplex $\tilde X_W^+$ is contractible, and this can be done for all Coxeter groups $W$ (not necessarily finite).
The idea is to construct a deformation retraction of $\tilde X_W^+$ onto the $0$-cell $\sigma_{(1, \varnothing)}$ through a discrete Morse vector field \cite{forman1998morse}, as illustrated in \Cref{fig:collapse}.
We outline the main results we are going to use from Forman's \textit{discrete Morse theory}, developed in \cite{forman1998morse, forman2002user, chari2000discrete, batzies2002discrete}; we refer the reader to \cite[Chapter 11]{kozlov2007combinatorial} for a textbook introduction to the topic.

\begin{figure}
    \begin{center}
        \begin{tikzpicture}[scale=1.6]
            \begin{scope}
                \clip (-0.5, -0.5) rectangle (3.3, 3.3);
                \draw[fill=blue, blue, opacity=0.05] (0, 0) rectangle (4, 4);
                \draw[fill=blue, blue, opacity=0.12] (2, 1) rectangle (3, 2);
                
                \foreach \x in {0, ..., 4} {
                    \foreach \y in {0, ..., 4} {
                        \pgfmathparse{
                            (\x==3 && \y==2)
                        }
                        \ifnum\pgfmathresult=1
                            \node[fill=white, circle, draw=blue, inner sep=1] (X-\x-\y) at (\x, \y) {};
                        \else
                            \node[fill=white, circle, draw=blue, inner sep=1, opacity=0.2] (X-\x-\y) at (\x, \y) {};
                        \fi
                    }
                }
    
                \foreach \x in {0,...,3} {
                    \foreach \y in {0,...,4} {
                        \pgfmathparse{
                            !(\x+1==3 && \y==2)
                        }
                        \ifnum\pgfmathresult=1
                            \draw[blue, opacity=0.2] (X-\x-\y) -- (X-\the\numexpr\x+1\relax-\y);
                        \else
                            \draw[blue] (X-\x-\y) -- (X-\the\numexpr\x+1\relax-\y);
                        \fi
                    }
                }
    
                \foreach \x in {0,...,4} {
                    \foreach \y in {0,...,3} {
                        \pgfmathparse{
                            !(\x==3 && \y+1==2)
                        }
                        \ifnum\pgfmathresult=1
                            \draw[blue, opacity=0.2] (X-\x-\y) -- (X-\x-\the\numexpr\y+1\relax);
                        \else
                            \draw[blue] (X-\x-\y) -- (X-\x-\the\numexpr\y+1\relax);
                        \fi
                    }
                }
    
                \foreach \x in {0, ..., 3} {
                    \foreach \y in {0, ..., 4} {
                        \pgfmathparse{
                            !(\x+1==3 && \y==2)
                        }
                    
                        \pgfmathsetmacro{\midX}{\x + 0.5}
                        \pgfmathsetmacro{\midY}{\y}
    
                        \ifnum\pgfmathresult=1
                            \draw[magenta, ->, thick, opacity=0.45] (X-\the\numexpr\x+1\relax-\y) -- (\midX, \midY);
                        \else
                            \draw[magenta, ->, thick] (X-\the\numexpr\x+1\relax-\y) -- (\midX, \midY);
                        \fi

                        \pgfmathparse{
                            !(\x+1==3 && \y+1==2)
                        }
    
                        \pgfmathsetmacro{\higherX}{\x + 1}
                        \pgfmathsetmacro{\higherY}{\y + 0.5}
    
                        \ifnum\pgfmathresult=1
                            \draw[magenta, ->, thick, opacity=0.45] (\higherX, \higherY) -- (\midX, \higherY);
                        \else
                            \draw[magenta, ->, thick] (\higherX, \higherY) -- (\midX, \higherY);
                        \fi

                    }
                }
    
                \foreach \y in {0, ..., 3} {
                    \pgfmathsetmacro{\midX}{0}
                    \pgfmathsetmacro{\midY}{\y + 0.5}
                    \draw[magenta, ->, thick, opacity=0.45] (X-0-\the\numexpr\y+1\relax) -- (\midX, \midY);
                }
            \end{scope}

            {
            \footnotesize
                \node at (-0.25, -0.12) {$\sigma_{(1, \varnothing)}$};
                \node at (3.45, 2.18) {$\sigma_{(a^3b^2, \varnothing)}$};

                \node at (2.5, 2.18) {$\sigma_{(a^2b^2, \{a\})}$};
                \node at (3.45, 1.32) {$\sigma_{(a^3b, \{b\})}$};

                \node at (2.5, 1.32) {$\sigma_{(a^2b, \{a, b\})}$};
            }

        \end{tikzpicture}
    \end{center}
    
    \caption{The positive subcomplex $\tilde X_W^+$ for the Coxeter group $\Z_2 \times \Z_2$, with arrows indicating a Morse matching (or \textit{discrete Morse vector field}) showing that $\tilde X_W^+$ deformation retracts onto the $0$-cell $\sigma_{(1, \varnothing)}$.
    The four cells in $\eta^{-1}(a^3b^2)$ are highlighted: the $0$-cell $\sigma_{(a^3b^2, \varnothing)}$ is matched with the $1$-cell $\sigma_{(a^2b^2, \{a\})}$, and the $1$-cell $\sigma_{(a^3b, \{b\})}$ is matched with the $2$-cell $\sigma_{(a^2b, \{a, b\})}$.
    }
    \label{fig:collapse}
\end{figure}

A discrete Morse vector field is encoded through a matching on the cells of a CW complex $Z$, which for simplicity we assume to be regular.
More precisely, let $\G(Z) = (\F(Z), \E(Z))$ be the directed graph where the set of vertices $\F(Z)$ consists of all cells of $Z$, and the set of edges $\E(Z)$ consists of all pairs $\sigma \to \tau$ where $\tau$ is a codimension-one face of $\sigma$.
A \emph{Morse matching} is a subset $\M \subseteq \E(Z)$ satisfying the following properties:
\begin{enumerate}
    \item Every cell appears at most once in $\M$ (so $\M$ is a partial matching on the graph $\G(Z)$).
    \item The graph $\G_\M(Z)$, obtained from $\G(Z)$ by reversing all the edges in $\M$, is acyclic.
    \item For every $\sigma \in \F(Z)$, the set of cells reachable from $\sigma$ in the graph $\G_\M(Z)$ is finite.
\end{enumerate}
The edges that are reversed when passing from $\G(Z)$ to $\G_\M(Z)$ are precisely the arrows in \Cref{fig:collapse}.
Given a Morse matching $\M$ on $Z$, the cells of $Z$ that do not appear in $\M$ are called $\M$-\emph{critical}.
The following is a special case of the main theorem of discrete Morse theory.

\begin{theorem}[\cite{forman1998morse, batzies2002discrete}]
    Let $\M$ be a Morse matching on a regular CW complex $Z$.
    If the $\M$-critical cells form a subcomplex $Z_\M$ of $Z$, then $Z$ deformation retracts onto $Z_\M$.
    \label{thm:dmt}
\end{theorem}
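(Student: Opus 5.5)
The plan is to realize the deformation retraction as a (possibly infinite) sequence of elementary collapses, one for each matched pair $(\sigma \to \tau) \in \M$. The order of the collapses is dictated by the acyclic graph $\G_\M(Z)$.

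\emph{Local step.} Suppose $(\sigma \to \tau) \in \M$ and $\tau$ is a face of no remaining cell other than $\sigma$. Since $Z$ is regular, $\overline{\sigma}$ is a closed ball and $\tau$ is a codimension-one cell in its boundary sphere. Hence there is a standard deformation retraction of $\overline{\sigma}$ onto $\partial\sigma \setminus \tau$, pushing in from the free face $\tau$. This removes exactly the two open cells $\sigma, \tau$, leaves a subcomplex, and is the identity outside $\sigma \cup \tau$.

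\emph{Finite case.} Here I argue by induction on $|\M|$. Let $N$ be the set of non-critical cells, and take a source $x$ of $\G_\M(Z)$ restricted to $N$; one exists by acyclicity.
\begin{itemize}
\item $x$ cannot be the top cell $\sigma$ of a pair, since the reversed edge $\tau \to \sigma$ enters it.
\item So $x = \tau$ for some pair $(\sigma \to \tau)$. Any other coface $\rho$ of $\tau$ gives an edge $\rho \to \tau$, so $\rho$ is not in $N$ (as $\tau$ is a source in $N$).
\item Nor can $\rho$ be critical. The critical cells form the subcomplex $Z_\M$, so a critical $\rho$ would force its face $\tau$ to be critical.
\end{itemize}
Thus $\tau$ is a free face of $\sigma$. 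We perform the local collapse, note that $\M \setminus \{(\sigma\to\tau)\}$ is still a Morse matching on the smaller complex with the same critical subcomplex, and conclude by induction.

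\emph{General case.} Condition (3) says the set of cells reachable from any cell is finite. Together with acyclicity, this lets us define the height $h(x)$ of a cell as the length of the longest directed path in $\G_\M(Z)$ starting at $x$, which is finite. For a pair, set $h(\sigma,\tau) := h(\tau) = h(\sigma)+1$. Let $Z_n$ be the union of $Z_\M$ with all matched pairs of height $\le n$. I will check that each $Z_n$ is a subcomplex: every face of a cell of $Z_n$ is either critical, its own partner, or lies at the end of an edge and hence has strictly smaller height. Moreover, $Z = \bigcup_n Z_n$.

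Passing from $Z_n$ to $Z_{n-1}$ removes all pairs of height exactly $n$. By the source argument above, in each such pair $\tau$ is a free face of $\sigma$ within $Z_n$: a second coface $\rho$ of $\tau$ would have $h(\rho) > h(\tau) = n$, so $\rho \notin Z_n$. These pairs have disjoint interiors, so the local collapses can be performed simultaneously, giving a deformation retraction $Z_n \to Z_{n-1}$. Finally, I concatenate these retractions, running the $n$-th one during the time interval $[1/(n+1), 1/n]$.

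The main obstacle is continuity of this infinite concatenation at time $0$. I would handle it using the weak topology of the CW complex. Each closed cell $\overline{x}$ lies in a finite subcomplex, and hence in some $Z_n$ by the finiteness of heights. Therefore $\overline{x}$ is stationary for all times near $0$, apart from finitely many stages, so the homotopy is continuous on every closed cell and hence on $Z$. It ends in $Z_\M$ (as $Z_0 = Z_\M$) and fixes $Z_\M$ pointwise throughout.
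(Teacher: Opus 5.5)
\textbf{Overall.} The paper gives no proof of this statement; it is quoted from Forman and Batzies. Your strategy is essentially the standard one from those sources: elementary collapses ordered by $\G_\M(Z)$, grouped into levels by a height function, with a telescoping homotopy for the infinite case. The finite case, the simultaneous collapses within one level, and the continuity at time $0$ via the weak topology are all fine. The gap is the equality $h(\tau)=h(\sigma)+1$, which you state as if it were part of a definition. It is in fact the step where regularity of $Z$ is really used.

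\textbf{Why the equality is not automatic, and where you need it.} The reversed edge $\tau\to\sigma$ only gives $h(\tau)\geq h(\sigma)+1$. In $\G_\M(Z)$ the cell $\tau$ also has edges $\tau\to\kappa$ to all of its codimension-one faces, and a priori a path starting at such a $\kappa$ could be longer than every path through $\sigma$. Your check that $Z_n$ is a subcomplex depends on the equality. Suppose a face $y$ of a cell of $Z_n$ is the upper cell of its own pair $(y\to y')$. Then ``$y$ has strictly smaller height'' only bounds $h(y)$, whereas the level of that pair is $h(y')$. To get $h(y')\leq n$ you need $h(y')=h(y)+1$; the critical case is $y=\kappa$ a face of $\tau$ with $h(\kappa)=n-1$.

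\textbf{How to prove it.} Show that $h(\kappa)\leq h(\sigma)$ for every codimension-one face $\kappa$ of $\tau$, by induction on $\dim\tau$.
\begin{enumerate}
\item Since $Z$ is regular, the interval $[\kappa,\sigma]$ of the face poset is a diamond. So there is exactly one other codimension-one face $\tau'\neq\tau$ of $\sigma$ containing $\kappa$.
\item The edge $\sigma\to\tau'$ survives in $\G_\M(Z)$, because $\sigma$ is already matched with $\tau$.
\item If $(\tau'\to\kappa)\notin\M$, the path $\sigma\to\tau'\to\kappa$ gives $h(\sigma)\geq h(\kappa)+2$.
\item If $(\tau'\to\kappa)\in\M$, the inductive hypothesis applied to the pair $(\tau',\kappa)$ gives $h(\kappa)=h(\tau')+1\leq h(\sigma)$.
\end{enumerate}
The base case is a $0$-cell $\tau$, whose only out-edge is the reversed one. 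Since the out-neighbours of $\tau$ are exactly $\sigma$ and these faces $\kappa$, you get $h(\tau)=h(\sigma)+1$.

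\textbf{A smaller hole in the freeness argument.} In both your finite and general cases you only exclude a second \emph{codimension-one} coface of $\tau$. The same diamond property handles higher cofaces. Suppose $\tau<\rho$ with $\rho$ in the current subcomplex and $\dim\rho\geq\dim\tau+2$. That subcomplex then contains a cell $\rho''$ with $\tau<\rho''\leq\rho$ and $\dim\rho''=\dim\tau+2$. The diamond $[\tau,\rho'']$ then gives two distinct codimension-one cofaces of $\tau$ in the subcomplex, which you have already ruled out.

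With these two additions your proof is complete.
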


The assumptions that $Z$ is regular and that the $\M$-critical cells form a subcomplex can both be relaxed, but we do not need such generality in the present notes.

We shall make use of a standard tool to construct Morse matchings, often referred to as the Patchwork Theorem.
For this, let $(P, \leq)$ be any poset, and suppose we have a map $\eta \colon \F(Z) \to P$ that respects the partial ordering, i.e., such that $\eta(\tau) \leq \eta(\sigma)$ for every directed edge $\sigma \to \tau$ in the graph $\G(Z)$.
Denote by $Z_{\leq p}$ the subcomplex of $Z$ consisting of all cells $\sigma$ with $\eta(\sigma) \leq p$.

\begin{theorem}[\cite{hersh2005optimizing, jonsson2008simplicial}]
    For every $p \in P$, assume that the following properties hold.
    \begin{enumerate}
        \item There is a partial matching $\M_p \subseteq \E(Z)$ only involving the cells in the fiber $\eta^{-1}(p)$.
        \item The induced subgraph of $\G_{\M_p}(Z)$ on the vertex set $\eta^{-1}(p)$ is acyclic.
        \item The subcomplex $Z_{\leq p}$ has a finite number of cells (i.e., it is compact).
    \end{enumerate}
    Then the union $\M = \bigcup_{p \in P} \M_p$ is a Morse matching on $Z$.
    \label{thm:patchwork}
\end{theorem}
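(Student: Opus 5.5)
We need to show that $\M = \bigcup_p \M_p$ satisfies the three defining properties of a Morse matching. The plan is to handle them in the order: matching property, acyclicity, finiteness of reachable sets. The central observation is that every edge of $\G_\M(Z)$ is weakly decreasing in $\eta$.

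\textbf{Matching property and monotonicity.} Each $\M_p$ is a partial matching supported on the fiber $\eta^{-1}(p)$. Distinct fibers are disjoint, so a cell $\sigma$ can appear only in $\M_{\eta(\sigma)}$, and there at most once. Hence $\M$ is a partial matching.

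Next we check that every edge $\sigma \to \tau$ of $\G_\M(Z)$ satisfies $\eta(\tau) \leq \eta(\sigma)$. There are two cases.
\begin{itemize}
    \item If the edge is not reversed, this is exactly the hypothesis that $\eta$ respects the order.
    \item If it is reversed, it comes from a matched pair lying in a single fiber, so $\eta(\tau) = \eta(\sigma)$.
\end{itemize}

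\textbf{Acyclicity.} Suppose $\G_\M(Z)$ contains a directed cycle $\sigma_0 \to \sigma_1 \to \dots \to \sigma_k = \sigma_0$. Along the cycle $\eta$ is weakly decreasing and returns to its starting value. Since $\leq$ is a partial order, antisymmetry forces $\eta(\sigma_i) = p$ for all $i$ and some fixed $p$.

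So the whole cycle lies inside $\eta^{-1}(p)$. Every edge between two cells of this fiber that belongs to $\M$ lies in $\M_p$, since $\M_q$ only involves cells of $\eta^{-1}(q)$. Therefore the edges of the cycle are precisely edges of the induced subgraph of $\G_{\M_p}(Z)$ on $\eta^{-1}(p)$. That subgraph is acyclic by hypothesis (2), which is a contradiction.

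This is the one step requiring care: one must check that the orientation of edges inside a fiber in $\G_\M(Z)$ agrees with that in $\G_{\M_p}(Z)$. It does, because reversals in other $\M_q$ never touch cells of $\eta^{-1}(p)$.

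\textbf{Finiteness.} Let $\sigma$ be a cell and put $p = \eta(\sigma)$. By monotonicity, every cell reachable from $\sigma$ has $\eta$-value at most $p$, so it lies in $Z_{\leq p}$. This subcomplex has finitely many cells by hypothesis (3), so the set of cells reachable from $\sigma$ is finite.

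All three properties hold, so $\M$ is a Morse matching.
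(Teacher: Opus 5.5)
Your proof is correct and complete. The paper gives no proof of its own, citing Hersh and Jonsson instead, and your argument is the standard one behind that citation. Namely, $\eta$ weakly decreases along every edge of $\G_\M(Z)$, so any directed cycle is confined to a single fiber, where $\G_\M(Z)$ and $\G_{\M_p}(Z)$ induce the same oriented subgraph; and every cell reachable from $\sigma$ lies in the finite subcomplex $Z_{\leq \eta(\sigma)}$.
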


We are now ready to construct a Morse matching on the positive subcomplex $\tilde X_W^+$ for any Coxeter group $W$.
Let $\eta \colon \F(\tilde X_W^+) \to G_W^+$ be the map that sends a cell $\sigma_{(\alpha, T)}$ to its ``longest vertex'' $\alpha \Delta_T$.
To apply \Cref{thm:patchwork}, we now show that $\eta$ respects the partial ordering, where the Artin monoid $G_W^+$ is equipped with the partial order of left divisibility $\lleq$.

\begin{lemma}
    If $\sigma \to \tau$ is an edge in $\G(\tilde X_W^+)$, then $\eta(\tau) \lleq \eta(\sigma)$.
    \label{lemma1}
\end{lemma}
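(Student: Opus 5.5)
Since $\eta(\tau) = \eta(\sigma)$ is impossible to rule out in general, we only need $\lleq$, and the plan is to reduce the claim to a single identity in the Artin monoid. Let $\sigma = \sigma_{(\alpha, T)}$ with $\alpha \in G_W^+$ and $W_T$ finite. By the description of the boundary of cells of $\tilde X_W$, a codimension-one face has the form $\tau = \sigma_{(\alpha\,\iota(u_0), R)}$, where $R = T \setminus \{t\}$ for some $t \in T$ and $u_0$ is the minimal representative of a coset $u_0 W_R \subseteq W_T$. We then have $\eta(\sigma) = \alpha \Delta_T$ and $\eta(\tau) = \alpha\,\iota(u_0)\,\Delta_R$. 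Left multiplication by $\alpha$ preserves left divisibility. It therefore suffices to show that $\iota(u_0)\Delta_R \lleq \Delta_T$ in $G_W^+$, and in fact the same argument works for any $R \subseteq T$, not only codimension one.

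To prove $\iota(u_0)\Delta_R \lleq \Delta_T$, the key step is the identity
\[ \iota(u_0)\,\iota(v) = \iota(u_0 v) \qquad \text{for } v \in W_R, \]
applied with $v = \delta_R$, the longest element of $W_R$. This holds because, by \Cref{thm:parabolic-subgroups}(ii), $l(u_0 v) = l(u_0) + l(v)$. Hence concatenating a reduced word for $u_0$ with a reduced word for $v$ gives a reduced word for $u_0 v$. By the well-definedness of $\iota$ (Tits), this concatenation represents $\iota(u_0 v)$ in $G_W^+$.

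Next I would use that the Garside element of $G_{W_R}$ is $\Delta_R = \iota(\delta_R)$, by the Brieskorn--Saito theorem applied to the spherical group $W_R$. I would also check that $\iota$ for $W_R$ agrees with $\iota$ for $W$ restricted to $W_R$: reduced words in $W_R$ are reduced in $W$, by \Cref{thm:parabolic-subgroups}(i) together with the fact that lengths are compatible with restriction to standard parabolic subgroups. This gives $\iota(u_0)\Delta_R = \iota(u_0\delta_R)$ with $w := u_0 \delta_R \in W_T$.

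Finally, apply \Cref{thm:longest-element} inside $W_T$: $l(w) + l(w^{-1}\delta_T) = l(\delta_T)$. So a reduced word for $w$ followed by a reduced word for $w^{-1}\delta_T$ is a reduced word for $\delta_T$, and again by well-definedness of $\iota$ we get
\[ \iota(\delta_T) = \iota(w)\,\iota(w^{-1}\delta_T). \]
Since $\iota(\delta_T) = \Delta_T$, this yields $\iota(w) \lleq \Delta_T$, which completes the argument. The only delicate point I anticipate is bookkeeping: the various objects $\iota$, $\Delta_R$, $\Delta_T$ live in the submonoid generated by $T$, and the identifications $\Delta_T = \iota(\delta_T)$ are for the parabolic Artin monoid. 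Both issues are handled by the standard fact that reduced expressions in $W_T$ are reduced in $W$.
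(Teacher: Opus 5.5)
Your proposal is correct and is essentially the paper's proof. You reduce to $\iota(u_0)\Delta_R \lleq \Delta_T$, use $l(u_0\delta_R)=l(u_0)+l(\delta_R)$ from \Cref{thm:parabolic-subgroups}, complete $u_0\delta_R$ to $\delta_T$ via \Cref{thm:longest-element}, and apply $\iota$ to obtain $\Delta_T=\iota(u_0)\Delta_R\,\iota(v)$, while your extra bookkeeping (that only the forward direction of left multiplication by $\alpha$ is needed, and that $\iota$ on $W_R$ and $W_T$ agrees with $\iota$ on $W$) just spells out what the paper leaves implicit.
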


\begin{proof}
    Let $\sigma = \sigma_{(\alpha, T)}$ and $\tau = \sigma_{(\alpha \, \iota(u_0), R)}$, where $u_0$ is the minimal representative of the coset $u_0 W_R \subset W_T$.
    We need to prove that $\alpha\, \iota(u_0) \Delta_R \lleq \alpha\Delta_T$, which is equivalent to $\iota(u_0) \Delta_R \lleq \Delta_T$.
    In the Coxeter group $W_T$, we have $l(u_0) + l(\delta_R) = l(u_0 \delta_R)$ by \Cref{thm:parabolic-subgroups}.
    Since $\delta_T$ is the longest element of $W_T$, we can write $\delta_T = u_0\delta_R v$ with $l(u_0 \delta_R) + l(v) = l(\delta_T)$ by \Cref{thm:longest-element}.
    Applying $\iota$, we obtain the factorization $\Delta_T = \iota(u_0) \Delta_R \, \iota(v)$, so in particular $\iota(u_0) \Delta_R \lleq \Delta_T$.
\end{proof}

Next, we construct perfect acyclic matchings $\M_\beta$ on all fibers $\eta^{-1}(\beta)$ for $\beta \neq 1$ by using the results on greatest common divisors and least common multiples in Artin monoids.
The matching we construct is the one exemplified in \Cref{fig:collapse}.

\begin{lemma}
    Let $\beta \in G_W^+ \setminus \{1\}$.
    Then there exists a matching $\M_\beta \subseteq \E(\tilde X_W^+)$, involving all the vertices in $\eta^{-1}(\beta)$, such that the induced subgraph of $\G_{\M_\beta}(\tilde X_W^+)$ on the vertex set $\eta^{-1}(\beta)$ is acyclic.
    \label{lemma2}
\end{lemma}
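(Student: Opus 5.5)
The plan is to identify the fiber $\eta^{-1}(\beta)$ with a Boolean lattice and then match cells by toggling one fixed generator.

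\textbf{Describing the fiber.} Let $R(\beta) = \{ s \in S \mid s \rleq \beta \}$. Since $\beta \neq 1$, any word for $\beta$ ends with some letter, so $R(\beta) \neq \varnothing$. A cell $\sigma_{(\alpha, T)}$ lies in $\eta^{-1}(\beta)$ exactly when $\alpha\Delta_T = \beta$. This forces $\Delta_T \rleq \beta$, and then $\alpha$ is determined by $T$, because the monoid embeds in the group.
\begin{itemize}
\item If $\Delta_T \rleq \beta$, then every $s \in T$ right-divides $\Delta_T$, hence right-divides $\beta$, so $T \subseteq R(\beta)$.
\item Conversely, if $T \subseteq R(\beta)$, then $\beta$ is a common right multiple of $T$. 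By \Cref{thm:gcd-lcm}(ii), $T$ has a least common multiple, so $W_T$ is spherical and $\Delta_T \rleq \beta$.
\end{itemize}
Hence $T \mapsto \sigma_{(\beta\Delta_T^{-1}, T)}$ is a bijection from the power set of $R(\beta)$ onto $\eta^{-1}(\beta)$.

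\textbf{Edges inside the fiber.} Let $R \subset T$ and take the face $\sigma_{(\alpha\iota(u_0), R)}$ of $\sigma_{(\alpha, T)}$. It lies in the same fiber iff $\iota(u_0)\Delta_R = \Delta_T$. Comparing lengths, this happens iff $u_0\delta_R = \delta_T$, i.e.\ $u_0 = \delta_T\delta_R$.

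This $u_0$ is indeed a minimal coset representative. Indeed, the minimal representative $u_0'$ of the coset $\delta_T W_R$ satisfies $l(u_0') + l(\delta_R) \le l(\delta_T)$. Since $u_0'\delta_R$ lies in $\delta_T W_R$, we have $l(\delta_T) \le l(u_0'\delta_R) = l(u_0') + l(\delta_R)$. Combining the two inequalities, $u_0'\delta_R$ has maximal length, so $u_0'\delta_R = \delta_T$ by \Cref{thm:longest-element}, and $u_0' = \delta_T\delta_R$. The factorization $\Delta_T = \iota(u_0)\Delta_R$ then follows by applying $\iota$ to a reduced expression, as in \Cref{lemma1}.

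So for each codimension-one subset $R \subset T \subseteq R(\beta)$ there is exactly one edge $\sigma_T \to \sigma_R$ within the fiber. The induced subgraph of $\G(\tilde X_W^+)$ on $\eta^{-1}(\beta)$ is therefore the Hasse diagram of the Boolean lattice $2^{R(\beta)}$, with edges pointing downward.

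\textbf{The matching.} Fix some $s \in R(\beta)$ and set
\[ \M_\beta = \{\, \sigma_{T\cup\{s\}} \to \sigma_T \;\mid\; T \subseteq R(\beta)\setminus\{s\} \,\}. \]
By the previous step these are genuine edges of $\G(\tilde X_W^+)$, and every cell of the fiber appears in exactly one of them. This is the matching of \Cref{fig:collapse}.

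\textbf{Acyclicity.} This is the standard argument for Boolean lattices.
\begin{itemize}
\item In $\G_{\M_\beta}$, every unmatched edge removes one element from $T$, while every reversed edge adds $s$.
\item Two reversed edges cannot be consecutive, since each cell is matched only once.
\item Along a cycle the total change in $|T|$ is zero, so the numbers of up-steps and down-steps are equal. Because up-steps are never consecutive, they must alternate with down-steps.
\item A down-step leaving $T \cup \{s\}$ that is not the matched edge removes some $t \neq s$. The next up-step must be the matched edge at the resulting set, which adds $s$ back. Hence each up/down pair strictly decreases $T \setminus \{s\}$.
\end{itemize}
A cycle would thus require $T\setminus\{s\}$ to strictly decrease and return to itself, which is impossible.

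The step needing the most care is the second one: showing that each face in the same fiber is unique and is an actual edge, i.e.\ verifying the minimal coset representative claim. The remaining steps are formal.
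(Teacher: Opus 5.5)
Your proposal is correct and follows the paper's own proof. Both identify $\eta^{-1}(\beta)$ with the power set of $\{s\in S \mid s\rleq\beta\}$ via \Cref{thm:gcd-lcm}, match $T\cup\{s\}$ with $T$ for a fixed $s$, and get acyclicity because $T\setminus\{s\}$ never grows and strictly shrinks at every down-step; your check that the only face of $\sigma_{(\alpha,T)}$ with label $R$ lying in the fiber is the one indexed by $u_0=\delta_T\delta_R$ fills in a step the paper only asserts.

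Two small slips should be tidied. First, $l(\delta_T)\le l(u_0'\delta_R)$ holds because $u_0'\delta_R$ is the longest element of the coset $u_0'W_R$ (each $u_0'w$ has length $l(u_0')+l(w)\le l(u_0')+l(\delta_R)$), not merely because it lies in that coset. Second, in the acyclicity step, a down-step from $T\cup\{s\}$ removing $t\neq s$ lands on a set that still contains $s$. So no up-step can follow at all, and nothing ``adds $s$ back''; the contradiction simply arrives sooner.
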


\begin{proof}
    Let $T_\beta = \{s \in S \mid s \rleq \beta\}$.
    By \Cref{thm:gcd-lcm}, we have that $\Delta_R \rleq \beta$ for every $R \subseteq T_\beta$.
    Therefore, the fiber $\eta^{-1}(\beta)$ is naturally in bijection with the powerset of $T_\beta$, and this bijection is order-preserving: $\sigma_{(\alpha_1, R_1)}$ is a face of $\sigma_{(\alpha_2, R_2)}$ if and only if $R_1 \subseteq R_2$.
    The induced subgraph of $\G(\tilde X_W^+)$ on the vertex set $\eta^{-1}(\beta)$ is then the $1$-skeleton of a $|T_\beta|$-dimensional hypercube.
    To construct a matching $\M_\beta$, we can fix any $\bar s \in T_\beta$ (the set $T_\beta$ is non-empty because $\beta \neq 1$) and pair $R \cup \{\bar s\}$ with $R$, for every $R \subseteq T_\beta \setminus \{ \bar s \}$.
    In the induced subgraph of $\G_{\M_\beta}(\tilde X_W^+)$, edges correspond to either removing an element of $T_\beta \setminus \{\bar s\}$ or adding $\bar s$; this subgraph has no cycles.
\end{proof}

We can finally put all the ingredients together.

\begin{theorem}
    For every Coxeter group $W$, the positive subcomplex $\tilde X_W^+$ is contractible.
    \label{thm:positive-subcomplex-contractible}
\end{theorem}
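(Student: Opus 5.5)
We combine \Cref{lemma1}, \Cref{lemma2}, the Patchwork Theorem (\Cref{thm:patchwork}) and \Cref{thm:dmt}. Take $Z = \tilde X_W^+$, $P = G_W^+$ with the left divisibility order $\lleq$, and $\eta(\sigma_{(\alpha,T)}) = \alpha\Delta_T$. First we check that $\lleq$ is a partial order on $G_W^+$. Reflexivity and transitivity are clear. For antisymmetry, suppose $\beta = \alpha\gamma$ and $\alpha = \beta\gamma'$. Then $l(\gamma)+l(\gamma') = 0$ by homogeneity of the relations, so $\gamma=\gamma'=1$. By \Cref{lemma1}, $\eta$ is order-preserving along the edges of $\G(Z)$. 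The complex $Z$ is regular, as noted for $\tilde X_W$, so the setting of \Cref{thm:patchwork,thm:dmt} applies.

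Next we define the matchings $\M_\beta$ fiber by fiber. For $\beta \neq 1$ we take $\M_\beta$ from \Cref{lemma2}. For $\beta = 1$ we take $\M_1 = \varnothing$. The fiber $\eta^{-1}(1)$ consists only of the $0$-cell $\sigma_{(1,\varnothing)}$: indeed $\alpha\Delta_T = 1$ forces $l(\alpha)+l(\Delta_T)=0$, hence $\alpha=1$ and $T=\varnothing$. Acyclicity on this fiber is trivial. So conditions (1) and (2) of \Cref{thm:patchwork} hold for every $p$.

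The remaining point is condition (3): $Z_{\leq \beta}$ must be finite for each $\beta \in G_W^+$. A cell $\sigma_{(\alpha,T)}$ lies in $Z_{\leq\beta}$ exactly when $\alpha\Delta_T \lleq \beta$. In particular $\alpha \lleq \beta$, so $l(\alpha) \leq l(\beta)$. There are only finitely many words of length at most $l(\beta)$ over the finite alphabet $S$, hence finitely many possible $\alpha$. There are also finitely many subsets $T \subseteq S$. So $Z_{\leq\beta}$ is finite. This step and the well-definedness of the order are the only places needing care; the heavy lifting is already done in \Cref{lemma1,lemma2}.

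\Cref{thm:patchwork} therefore gives a Morse matching $\M = \bigcup_\beta \M_\beta$ on $Z$. Every cell in a fiber with $\beta\neq1$ is matched, so the only $\M$-critical cell is $\sigma_{(1,\varnothing)}$, which is a subcomplex. By \Cref{thm:dmt}, $\tilde X_W^+$ deformation retracts onto a point and is therefore contractible.
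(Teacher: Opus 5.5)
Your proof is correct and follows the same route as the paper. You apply the Patchwork Theorem to $\eta(\sigma_{(\alpha,T)}) = \alpha\Delta_T$ with the order $\lleq$, use \Cref{lemma1,lemma2} for the order-compatibility and fiberwise matchings, check that each $Z_{\leq\beta}$ is finite, and conclude via \Cref{thm:dmt} with $\sigma_{(1,\varnothing)}$ as the only critical cell. Your extra checks are valid and only make explicit details the paper leaves implicit: antisymmetry of $\lleq$, the fiber over $1$, and finiteness via $l(\alpha)\le l(\beta)$ rather than via finite fibers.
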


\begin{proof}
    Let $Z = \tilde X_W^+$ and recall the order-preserving map $\eta \colon \F(Z) \to G_W^+$ defined earlier.
    For every $\beta \in G_W^+ \setminus \{1\}$, the subcomplex $Z_{\lleq \beta}$ has a finite number of cells because each fiber is finite (as observed in the proof of \Cref{lemma2}) and there is a finite number of monoid elements that are $\lleq \beta$.

    Consider now the matching $\M = \bigcup_{\beta \in G_W^+ \setminus \{1\}} \M_\beta$ on $Z$.
    Thanks to \Cref{lemma1,lemma2} and the previous observation, we can apply \Cref{thm:patchwork} and conclude that $\M$ is a Morse matching.
    The only critical cell is the $0$-cell $\sigma_{(1, \varnothing)}$, so $Z$ is contractible by \Cref{thm:dmt}.
\end{proof}

\begin{corollary}
    The $K(\pi, 1)$ conjecture holds for all Artin groups of spherical type.
\end{corollary}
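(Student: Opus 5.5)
The corollary follows by chaining results already established, so the plan is short. Let $W$ be a spherical Coxeter group. By the theorem of Brieskorn and van der Lek, $\pi_1(Y_W) \cong G_W$. So it suffices to show that $Y_W$ is aspherical, i.e.\ that $\pi_n(Y_W) = 0$ for all $n \geq 2$. By Salvetti's theorem, $Y_W$ is homotopy equivalent to the CW complex $X_W$. For $n \geq 2$, the higher homotopy groups of $X_W$ coincide with those of its universal cover $\tilde X_W$. Hence it is enough to prove that $\tilde X_W$ is contractible.

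By \Cref{thm:positive-subcomplex-contractible}, the positive subcomplex $\tilde X_W^+$ is contractible; this holds for every Coxeter group. Since $W$ is spherical, \Cref{lemma0} applies and upgrades this to contractibility of the whole universal cover $\tilde X_W$. Then $X_W$, and therefore $Y_W$, is a $K(G_W,1)$. As a CW complex with contractible universal cover, $X_W$ is an Eilenberg--MacLane space for its fundamental group, which is $G_W$.

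The only point requiring care is already handled inside \Cref{lemma0}. One must check that the subcomplexes $Z_k = \Delta^k \tilde X_W^+$ form a nested chain exhausting $\tilde X_W$. The chain is nested because $\Delta^{k+1}\beta = \Delta^k(\Delta\beta)$ with $\Delta\beta \in G_W^+$. It exhausts $\tilde X_W$ because of the Garside normal form $\alpha = \Delta^k\beta$. Compactness of spheres then gives vanishing of all homotopy groups, and Whitehead's theorem gives contractibility. So I expect no real obstacle at the level of the corollary: the substantive work sits in \Cref{lemma1}, \Cref{lemma2} and the discrete Morse theory argument, all of which we may assume.
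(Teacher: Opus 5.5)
Your proposal is correct and is essentially the paper's proof: you combine \Cref{thm:positive-subcomplex-contractible} with \Cref{lemma0} to get contractibility of $\tilde X_W$, and you make explicit the step the paper leaves implicit, namely Salvetti's homotopy equivalence $Y_W \simeq X_W$ together with $\pi_1(Y_W) \cong G_W$. Your re-check that the chain $Z_k = \Delta^k \tilde X_W^+$ is nested and exhausts $\tilde X_W$ is also accurate.
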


\begin{proof}
    Apply \Cref{lemma0} and \Cref{thm:positive-subcomplex-contractible} to conclude that the universal cover $\tilde X_W$ is contractible.
\end{proof}

In Artin monoids of non-spherical type, there is no common multiple of all standard generators, and thus, no Garside element. As a result, the proof of \Cref{lemma0} is not applicable, and the proof strategy outlined in the present notes is unlikely to be generalizable.
Alternative Garside structures, using a different generating set, have proven to be a fruitful avenue for studying certain Artin groups of non-spherical type. This has led, in particular, to the solution of the word problem and the $K(\pi, 1)$ conjecture in the affine case \cite{mccammond2017artin,paolini2021proof}.

\bibliographystyle{amsalpha-abbr}
\bibliography{bibliography}

\providecommand{\bysame}{\leavevmode\hbox to3em{\hrulefill}\thinspace}
\providecommand{\MR}{\relax\ifhmode\unskip\space\fi MR }
\providecommand{\MRhref}[2]{%
  \href{http://www.ams.org/mathscinet-getitem?mr=#1}{#2}
}
\providecommand{\href}[2]{#2}
\begin{thebibliography}{Hua24b}

\bibitem[Art25]{artin1925theorie}
E.~Artin, \emph{Theorie der Z{\"o}pfe}, Abh. Math. Sem. Univ. Hamburg (1925).

\bibitem[Bat02]{batzies2002discrete}
E.~Batzies, \emph{Discrete Morse theory for cellular resolutions}, Ph.D.
  thesis, 2002,
  \url{http://archiv.ub.uni-marburg.de/diss/z2002/0115/pdf/deb.pdf}.

\bibitem[BB06]{bjorner2006combinatorics}
A.~Bj{\"o}rner and F.~Brenti, \emph{Combinatorics of Coxeter groups}, vol. 231,
  Springer-Verlag, 2006.

\bibitem[Bou68]{bourbaki1968elements}
N.~Bourbaki, \emph{{\'E}l{\'e}ments de math{\'e}matique: Fasc. XXXIV. Groupes
  et alg{\`e}bres de Lie; Chap. 4, Groupes de Coxeter et syst{\`e}mes de Tits;
  Chap. 5; Chap. 6, Syst{\`e}mes de racines}, Hermann, 1968.

\bibitem[Bri71]{brieskorn1971fundamentalgruppe}
E.~Brieskorn, \emph{Die Fundamentalgruppe des Raumes der regul{\"a}ren Orbits
  einer endlichen komplexen Spiegelungsgruppe}, Inventiones Mathematicae
  \textbf{12} (1971), 57--61.

\bibitem[Bri73]{brieskorn1973groupes}
\bysame, \emph{Sur les groupes de tresses [d'apr{\`e}s VI Arnol'd]},
  S{\'e}minaire Bourbaki vol. 1971/72 Expos{\'e}s 400--417, Springer, 1973,
  pp.~21--44.

\bibitem[BS72]{brieskorn1972artin}
E.~Brieskorn and K.~Saito, \emph{Artin-gruppen und Coxeter-gruppen},
  Inventiones Mathematicae \textbf{17} (1972), no.~4, 245--271.

\bibitem[CD95]{charney1995k}
R.~Charney and M.~W. Davis, \emph{The $K(\pi, 1)$-problem for hyperplane
  complements associated to infinite reflection groups}, Journal of the
  American Mathematical Society (1995), 597--627.

\bibitem[Cha00]{chari2000discrete}
M.~K. Chari, \emph{On discrete Morse functions and combinatorial
  decompositions}, Discrete Mathematics \textbf{217} (2000), no.~1, 101--113.

\bibitem[CMS10]{callegaro2010k}
F.~Callegaro, D.~Moroni, and M.~Salvetti, \emph{The $K (\pi, 1)$ problem for
  the affine Artin group of type $B_n$ and its cohomology}, Journal of the
  European Mathematical Society \textbf{12} (2010), no.~1, 1--22.

\bibitem[Cox34]{coxeter1934discrete}
H.~S.~M. Coxeter, \emph{Discrete groups generated by reflections}, Annals of
  Mathematics (1934), 588--621.

\bibitem[Cox35]{coxeter1935complete}
H.~S. Coxeter, \emph{The complete enumeration of finite groups of the form
  $R_i^2 = (R_i R_j)^{k_{ij}}= 1$}, Journal of the London Mathematical Society
  \textbf{1} (1935), no.~1, 21--25.

\bibitem[Del72]{deligne1972immeubles}
P.~Deligne, \emph{Les immeubles des groupes de tresses
  g{\'e}n{\'e}ralis{\'e}s}, Inventiones Mathematicae \textbf{17} (1972), no.~4,
  273--302.

\bibitem[DPS24]{delucchi2022dual}
E.~Delucchi, G.~Paolini, and M.~Salvetti, \emph{Dual structures on Coxeter and
  Artin groups of rank three}, Geometry \& Topology \textbf{28} (2024), no.~9,
  4295--4336.

\bibitem[FN62]{fox1962braid}
R.~Fox and L.~Neuwirth, \emph{The braid groups}, Mathematica Scandinavica
  \textbf{10} (1962), 119--126.

\bibitem[For98]{forman1998morse}
R.~Forman, \emph{Morse theory for cell complexes}, Advances in Mathematics
  \textbf{134} (1998), no.~1, 90--145.

\bibitem[For02]{forman2002user}
\bysame, \emph{A user's guide to discrete Morse theory}, Séminaire
  Lotharingien de Combinatoire \textbf{48} (2002).

\bibitem[Hen85]{hendriks1985hyperplane}
H.~Hendriks, \emph{Hyperplane complements of large type}, Inventiones
  Mathematicae \textbf{79} (1985), no.~2, 375--381.

\bibitem[Her05]{hersh2005optimizing}
P.~Hersh, \emph{On optimizing discrete Morse functions}, Advances in Applied
  Mathematics \textbf{35} (2005), no.~3, 294--322.

\bibitem[HH25]{haettel2023new}
T.~Haettel and J.~Huang, \emph{New Garside structures and applications to Artin
  groups}, Duke Mathematical Journal \textbf{174} (2025), no.~9, 1665--1722.

\bibitem[Hua24a]{huang2024cycles}
J.~Huang, \emph{Cycles in spherical Deligne complexes and application to
  $K(\pi, 1)$-conjecture for Artin groups}, arXiv preprint arXiv:2405.12068
  (2024).

\bibitem[Hua24b]{huang2023labeled}
\bysame, \emph{Labeled four cycles and the $K(\pi, 1)$-conjecture for Artin
  groups}, Inventiones Mathematicae \textbf{238} (2024), no.~3, 905--994.

\bibitem[Hum92]{humphreys1992reflection}
J.~E. Humphreys, \emph{Reflection groups and Coxeter groups}, vol.~29,
  Cambridge university press, 1992.

\bibitem[Jon08]{jonsson2008simplicial}
J.~Jonsson, \emph{Simplicial complexes of graphs}, vol. 1928, Springer, 2008.

\bibitem[Koz07]{kozlov2007combinatorial}
D.~N. Kozlov, \emph{Combinatorial algebraic topology}, vol.~21,
  Springer-Verlag, 2007.

\bibitem[LP21]{lofano2021euclidean}
D.~Lofano and G.~Paolini, \emph{Euclidean matchings and minimality of
  hyperplane arrangements}, Discrete Mathematics \textbf{344} (2021), no.~3,
  112232.

\bibitem[MS17]{mccammond2017artin}
J.~McCammond and R.~Sulway, \emph{Artin groups of Euclidean type}, Inventiones
  Mathematicae \textbf{210} (2017), no.~1, 231--282.

\bibitem[Oko79]{okonek1979dask}
C.~Okonek, \emph{Das K($\pi$, 1)-Problem f{\"u}r die affinen Wurzelsysteme vom
  Typ $A_n$, $C_n$}, Mathematische Zeitschrift \textbf{168} (1979), no.~2,
  143--148.

\bibitem[Pao15]{paolini2015thesis}
G.~Paolini, \emph{Discrete Morse theory and the $K(\pi, 1)$ conjecture},
  Master's thesis, University of Pisa, 2015.

\bibitem[Par02]{paris2002artin}
L.~Paris, \emph{Artin monoids inject in their groups}, Commentarii Mathematici
  Helvetici \textbf{77} (2002), no.~3, 609--637.

\bibitem[Par14]{paris2012k}
\bysame, \emph{$K(\pi, 1)$ conjecture for Artin groups}, Annales de la
  Facult{\'e} des Sciences de Toulouse Math{\'e}matiques, vol.~23, 2014,
  pp.~361--415.

\bibitem[PS21]{paolini2021proof}
G.~Paolini and M.~Salvetti, \emph{Proof of the $K (\pi, 1)$ conjecture for
  affine Artin groups}, Inventiones Mathematicae \textbf{224} (2021), no.~2,
  487--572.

\bibitem[Sal87]{salvetti1987topology}
M.~Salvetti, \emph{Topology of the complement of real hyperplanes in
  $\mathbb{C}^N$}, Inventiones Mathematicae \textbf{88} (1987), no.~3,
  603--618.

\bibitem[Sal94]{salvetti1994homotopy}
\bysame, \emph{The homotopy type of Artin groups}, Mathematical Research
  Letters \textbf{1} (1994), no.~5, 565--577.

\bibitem[Tit61]{tits1961groupes}
J.~Tits, \emph{Groupes et g{\'e}om{\'e}tries de Coxeter}, Institut des Hautes
  {\'E}tudes Scientifiques (1961).

\bibitem[Tit69]{tits1969probleme}
\bysame, \emph{Le probleme des mots dans les groupes de Coxeter}, Symposia
  Mathematica, vol.~1, 1969, pp.~175--185.

\bibitem[VdL83]{van1983homotopy}
H.~Van~der Lek, \emph{The homotopy type of complex hyperplane complements},
  Ph.D. thesis, Katholieke Universiteit te Nijmegen, 1983.

\bibitem[Wit41]{witt1941spiegelungsgruppen}
E.~Witt, \emph{Spiegelungsgruppen und Aufz{\"a}hlung halbeinfacher Liescher
  Ringe}, Abhandlungen aus dem Mathematischen Seminar der Universit{\"a}t
  Hamburg, vol.~14, Springer, 1941, pp.~289--322.

\end{thebibliography}

\end{document}